\newsavebox{\pb} 
\sbox\pb{
\begin{tikzpicture}
\draw (0,0) -- (1ex,0ex);
\draw (1ex,0ex) -- (1ex,1ex);
\end{tikzpicture}}
\newcommand{\pullback}{\arrow[dr, phantom, "\usebox\pb" , very near start, color=black]}
\newcommand{\maps}{\colon}    
\newcommand{\R}{{\mathbb R}}  
\newcommand{\C}{{\mathbb C}}  
\newcommand{\Z}{{\mathbb Z}}  
\newcommand{\G}{\mathcal{G}} 
\renewcommand{\H}{\mathcal{H}} 
\newcommand{\I}{\mathcal{I}} 
\newcommand{\Oh}{\mathcal{O}} 
\newcommand{\E}{\mathcal{E}} 
\newcommand{\F}{\mathcal{F}} 
\newcommand{\J}{\mathcal{J}} 
\newcommand{\Cal}{\mathcal{C}} 
\renewcommand{\L}{\mathcal{L}} 
\newcommand{\T}{\mathcal{T}} 
\newcommand{\DelCpx}{\mathcal{D}} 
\newcommand{\CDR}{\Omega_{\mathbb{C}}} 
\newcommand{\ECDR}{\Omega_{\mathbb{C},0}} 
\newcommand{\Soul}{\Omega_s} 
\newcommand{\Grb}[2]{\mathbb{G}^\nabla_{ #2 }\left( #1 \right)} 
\newcommand{\U}{\mathcal{U}} 
\newcommand{\DD}{\operatorname{DD}} 
\newcommand{\Del}{\operatorname{Del}} 
\newcommand{\curv}{\operatorname{curv}} 
\renewcommand{\b}{{\mathfrak{b}}}  
\newcommand{\Hom}{{\rm Hom}} 
\renewcommand{\hom}{\underline{{\rm Hom}}} 
\newcommand{\inclusion}{\hookrightarrow}
\newcommand{\iso}{\cong} 
\newcommand{\ns}{\phantom{-1}} 
\newcommand{\define}[1]{{\bf \boldmath{#1}}}
\newcommand{\arxiv}[1]{\href{http://arxiv.org/abs/#1}{arXiv:{#1}}}
\newcommand{\Cech}{\v{C}ech}
\newtheorem{thm}{Theorem}    
\newtheorem{cor}[thm]{Corollary}
\newtheorem{lem}[thm]{Lemma}
\newtheorem{prop}[thm]{Proposition}
\theoremstyle{definition}
\newtheorem{defn}[thm]{Definition}
\newtheorem{ex}[thm]{Example}
\title{Bundle gerbes on supermanifolds}
\author{John Huerta\\[.5em]
{\small Department of Mathematics} \\[-.3em]
{\small Instituto Superior T\'ecnico} \\[-.3em]
{\small Av.\ Ravisco Pais 1} \\[-.3em]
{\small University of Lisbon} \\[-.3em]
{\small 1049-001 Lisbon, Portugal} \\
\small john.huerta@tecnico.ulisboa.pt
}
\begin{document}
\maketitle

\begin{abstract}
  We show that every bundle gerbe on a supermanifold decomposes into a bundle gerbe
  over the underlying manifold and a 2-form on the supermanifold. This decomposition
  is not canonical, but is determined by the choice of a projection map to the
  underlying manifold. Along the way, we prove the familiar cohomological
  classification theorems for bundle gerbes and bundle gerbes with connection in the
  context of supermanifolds.
\end{abstract}

\section{Introduction}

A `bundle gerbe' is a geometric object defined over a base manifold whose
characteristic class sits in the third cohomology of the base \cite{Murray}. It is
analogous to a complex line bundle, and its characteristic class is an analogue of
the Chern class in degree 2. A line bundle can be equipped with a connection, and
when this is done the curvature of the connection is a closed 2-form and is a de Rham
representative of the Chern class. Analogously, a bundle gerbe can also be equipped
with a connection, and when this is done the curvature is a closed 3-form and is a de
Rham representative of the characteristic class.

The theory of line bundles with connection has been extended to supermanifolds by
Kostant \cite{Kostant} in his study of prequantization in supergeometry. We extend
this analysis to bundle gerbes on supermanifolds, proving an analogous cohomological
classification to the one on ordinary manifolds: bundle gerbes with connection are
classified, up to equivalence, by the Deligne cohomology of a supermanifold.

We then analyze exactly what is gained by extending the theory of bundle gerbes from
manifolds to supermanifolds: up to equivalence, every bundle gerbe on a supermanifold
is determined by a bundle gerbe on the underlying manifold, called the body, and a
2-form on the supermanifold. This decomposition is not canonical, but depends on a
choice of projection to the body.

To state this result explicitly, let $M$ be a supermanifold. For any supermanifold,
there is a canonical inclusion of an ordinary manifold $i \maps M_b \inclusion
M$. Invoking the poetic terminology of DeWitt~\cite{DeWitt}, we call $M_b$ the
\define{body} of $M$, though it is more commonly called the \define{reduced
manifold}. A bundle gerbe $\G$ on $M$ can be pulled back along the inclusion $i$ to
yield a bundle gerbe $i^* \G$ on $M_b$. It turns out every bundle gerbe on $M_b$ can be
obtained in this way, up to equivalence.

What about conversely? Can we recover $\G$ from $i^* \G$? To examine this, we choose
a \define{body projection}: a map $p \maps M \to M_b$ satisfying $pi =
1_{M_b}$. Unlike the inclusion $i$, this map is not canonical, but it does exist in
the category of smooth supermanifolds. Our main result is that every bundle gerbe
$\G$ on the supermanifold $M$ decomposes as follows, up to equivalence:
\[ \G \simeq p^* \G_b \otimes \I_\beta . \]
Here, $\G_b$ is bundle gerbe on the body $M_b$, and $\I_\beta$ is the trivial bundle
gerbe with `curving' given by the 2-form $\beta \in \Omega^2(M)$. Tensoring with
$\I_\beta$ has the effect of adding the 2-form $\beta$ to the curving of $p^* \G_b$,
a part of the connection data. Moreover, $\G_b$ is unique---it must be equivalent to
$i^* \G$---whereas $\beta$ is unique up to the addition of an exact 2-form, provided
it is chosen so that $i^* \beta = 0$, as can always be done.

With enough experience in supergeometry, this decomposition makes intuitive
sense. Folklore tells us that all the topological information in $M$ lies in the body
$M_b$. Bundle gerbes without connection are topological objects, so we expect a
bijection between bundle gerbes without connection on $M_b$ and those on $M$. Thus
any bundle gerbe $\G$ on $M$ without connection must satisfy $\G \simeq p^* \G_b$ for
some bundle gerbe $\G_b$ on $M_b$ without connection. Tensoring with the dual of
$\G$, this says:
\[ \G^* \otimes p^* \G_b \simeq \I ,\]
where $\I$ is the trivial bundle gerbe without connection. Equipping $\G$ and $\G_b$
with connection, we thus have:
\[ \G^* \otimes p^* \G_b \simeq \I_\beta ,\]
because a trivial bundle gerbe with connection $\I_\beta$ is a 2-form, $\beta$. This
is essentially our result.

Our work is not the first to consider bundle gerbes on supermanifolds; there are two
important antecedents. First, bundle gerbes are a special case of principal 2-bundles
\cite{Bartels, NikolausWaldorf}, and Schreiber has built the machinery for principal
$\infty$-bundles over supermanifolds~\cite{Schreiber}. For bundle gerbes, he
constructs a higher stack denoted $\mathbf{B}^2{\rm U}(1)_{\rm conn}$ on the site of
supermanifolds, and the objects of this stack are bundle gerbes with
connection. Separately, Suszek has constructed bundle gerbes over supermanifolds of
relevance to string theory, focusing on the subtle interplay of bundle gerbes with
supersymmetry, $\kappa$-symmetry, and \.Inönü--Wigner contraction~\cite{Suszek}.

This paper is organized as follows: in Section \ref{sec:supergeometry}, we outline
the basics of supergeometry. Section \ref{sec:gerbes} is the core of the paper. In
Section \ref{sec:noconn}, we define bundle gerbes over supermanifolds and show they
are classified, up to equivalence, by the Dixmier--Douady class in degree 3. In
Section \ref{sec:conn}, we add connection data to our bundle gerbes, and show that
bundle gerbes with connection are classified by Deligne cohomology. In Section
\ref{sec:curvature}, we define the curvature 3-form, and give necessary and
sufficient conditions for a 3-form to be the curvature of a bundle gerbe. Finally, in
Section \ref{sec:bodyandsoul}, we prove our main result, decomposing a bundle gerbe
on a supermanifold into a bundle gerbe on the body and a 2-form on the supermanifold.

\subsection*{Acknowledgments}

We thank Roger Picken for countless conversations, and Michael Murray and Danny
Stevenson for helping with particular issues. We thank Kowshik Bettadapura and
Rafa\l{} Suszek for a close reading of a previous version of this article, and Dan
Berwick-Evans for a stimulating suggestion. The last conversation occurred at the
Perimeter Institute during the wonderful workshop \define{QFT for Mathematicians} in
June 2019. We thank Perimeter for their hospitality. This work was partially
supported by the FCT project grant \define{Higher Structures and Applications},
PTDC/MAT-PUR/31089/2017.

\section{Elements of supergeometry}
\label{sec:supergeometry}

In the physics literature, a `supermanifold', often called a `superspace', is a space
with both even, commuting coordinates and odd, anticommuting coordinates. For
mathematicians, it is familiar that any point of an $n$-dimensional manifold has a
neighborhood that we can describe using $n$ coordinate functions, $\{x_i\}$. Since
these are real-valued functions, they pairwise commute in the algebra of smooth
functions on this neighborhood. The idea of the generalization from manifold to
supermanifold is that any point of a $m|n$-dimensional supermanifold has a
neighborhood we can describe using $m$ even coordinates $\{x_i\}$, which
pairwise commute, along with $n$ odd coordinates $\{\theta_j\}$, which
pairwise anticommute:
\[ \theta_j \theta_{j'} = - \theta_{j'} \theta_j . \]
Since real-valued functions form a commutative algebra, the coordinates $\{ x_i,
\theta_j \}$ {\em are not functions}. Instead, they are elements of a $\Z_2$-graded
commutative algebra, which is related to the algebra of smooth functions in a precise
way: we obtain the algebra of smooth functions by quotienting out by the ideal
generated by $\{\theta_j\}$, or in other words by setting $\theta_j = 0$. This is
the beginning of supergeometry.

In this section, we give a rapid review of those parts of supergeometry we will
need. For a more leisurely introduction, we recommend the survey article of
Leites~\cite{Leites}, the books of Manin~\cite{Manin} or Caston, Carmeli and
Fioresi~\cite{CCF}, or the notes of Deligne and Morgan~\cite{DM}.

\subsection{Superalgebra}

We begin with the algebra underlying supergeometry. A \define{super vector space} $V$
is a $\Z_2$-graded real vector space, $V = V_0 \oplus V_1$. We refer to the
$\Z_2$-grading as the \define{parity}, and say elements of $V_0$ are \define{even}
while those of $V_1$ are \define{odd}. As usual, elements in $V_i$ are called
\define{homogeneous}, and the parity of a homogeneous element $v \in V_i$ is denoted
by vertical braces: $|v| = i$.

A \define{supercommutative superalgebra} $A$ is a $\Z_2$-graded real associative
algebra with unit, $A = A_0 \oplus A_1$, such that $ab = (-1)^{|a||b|} ba$ for all
homogeneous elements $a$ and $b$ of parity $|a|$ and $|b|$, respectively. The key
example of a supercommutative superalgebra is the \define{Grassmann algebra}
$\Lambda(\theta_1, \ldots, \theta_n)$, free on $n$ odd generators $\theta_i$.

A \define{left module} of a supercommutative superalgebra $A$ is super vector space
$\mathcal{M}$ where $A$ acts on the left, $A \otimes \mathcal{M} \to \mathcal{M}$,
respecting the unit, multiplication, and parity:
\begin{itemize}
\item $1m = m$;
\item $a(bm) = (ab)m$;
\item $|am| = |a| + |m|$.
\end{itemize}
A \define{right module} is defined similarly, and the \define{tensor product}
$\mathcal{M} \otimes_A \mathcal{N}$ of left module $\mathcal{M}$ with a right module
$\mathcal{N}$ is the usual notion of graded tensor product. Because $A$ is
supercommutative, any left module is automatically a right module, and vice versa,
when we define $am = (-1)^{|a||m|} ma$ for $a \in A$ and $m \in \mathcal{M}$. Thus we
will stop distinguishing left and right modules.

We say that a module $\mathcal{M}$ is \define{free of finite rank} if it admits a
finite homogeneous basis: this is a pair of finite subsets $\{e_1, \ldots, e_m\}
\subseteq \mathcal{M}_0$ and $\{f_1, \ldots, f_n\} \subseteq \mathcal{M}_1$ such that
the union is linearly independent over $A$ and spans $\mathcal{M}$. The pair of
natural numbers $m|n$ is called the \define{rank} of $\mathcal{M}$ and is independent
of the choice of basis. In supergeometry, the rank or dimension is always an ordered
pair of natural numbers, and we adopt the notation $m|n$ for this ordered pair.

\subsection{Supermanifolds}
\label{sec:superman}

Having described the necessary algebra, we are ready to introduce the main objects of
study in supergeometry.
\begin{defn}

A \define{supermanifold} $M$ of dimension $m|n$ is a pair $(M_b, \Oh_M)$, where:
\begin{itemize}
\item $M_b$ is a topological manifold of dimension $m$, called the \define{body} of
  $M$; in particular, $M_b$ is paracompact, second countable and Hausdorff;
\item $\Oh_M$ is a sheaf of supercommutative superalgebras on $M_b$, called the
  \define{structure sheaf} of $M$; the sheaf $\Oh_M$ is required to be
  \define{local}, meaning that each stalk $\Oh_{M,x}$ is a local ring, containing a
  unique maximal ideal;
\item $M$ is equipped with an atlas extending a topological atlas of $M_b$: for each
  chart $(U, \varphi \maps U \to \R^m)$ in our atlas, we choose a local isomorphism of
  sheaves of superalgebras:
  \[ \phi \maps \Cal^\infty_{\varphi(U)} \otimes \Lambda(\theta_1, \ldots, \theta_n)
    \to \Oh_M|_U . \]
  Here, $\Cal^\infty_{\varphi(U)}$ is the sheaf of smooth functions on the open set
  $\varphi(U) \subseteq \R^m$, $\Oh_M|_U$ is the restriction of $\Oh_M$ to the open
  set $U \subseteq M_b$, and $\phi$ is \define{local} in the sense that it preserves
  the maximal ideals on the stalks. The triple $(U, \varphi, \phi)$ is called a
  \define{chart} on $M$, and the collection of charts is an \define{atlas} for $M$.
\end{itemize}

\end{defn}

\begin{ex}
  Here are some first examples of supermanifolds:
  \begin{itemize}
  \item Any smooth manifold $X$ of dimension $m$ is trivially a supermanifold of
    dimension $m|0$, with structure sheaf the sheaf of smooth functions
    $\Cal^\infty_X$.
  \item A \define{super Cartesian space} is a supermanifold of the form $\R^{m|n} =
    (\R^m, \Cal^\infty_{\R^m} \otimes \Lambda(\theta_1, \ldots, \theta_n))$.
  \item A \define{super domain} $U^{m|n}$ of dimension $m|n$ is the restriction of
    the structure sheaf on $\R^{m|n}$ to an open set $U \subseteq \R^{m}$; in other
    words, $U^{m|n} = (U, \Cal^\infty_U \otimes \Lambda(\theta_1, \ldots,
    \theta_n))$.
  \item More generally, given any supermanifold $M$ and open set $U \subseteq M_b$ of
    the body, $U$ becomes a supermanifold in the obvious way: $U = (U, \Oh_M|_U)$.
  \item Given any vector bundle $E \to X$ over a smooth manifold $X$, we obtain the
    supermanifold denoted $\Pi E = (X, \Lambda \E^*))$, where we write
    $\E$ for the sheaf of sections of $E$. In words, the structure sheaf of $\Pi E$
    is given by sections of the exterior algebra bundle of $E^*$, the dual of $E$. If
    $E$ has rank $n$ and $X$ has dimension $m$, then $\Pi E$ is a supermanifold of
    dimension $m|n$.
  \item In particular, for the tangent bundle $TX \to X$, we get the \define{odd
    tangent bundle} $\Pi TX = (X, \Omega_X^\bullet)$, whose structure sheaf is the
    algebra of differential forms, regarded as a supercommutative superalgebra.
  \end{itemize}
\end{ex}

We define a \define{smooth map} $f \maps M \to N$ between supermanifolds to be a pair $(f,
f^*)$, where $f \maps M_b \to N_b$ is continuous, and $f^* \maps \Oh_N \to f_* \Oh_M$
is a homomorphism of sheaves of superalgebras over $N_b$. Here $f_* \Oh_M$ denotes
\define{pushforward} of $\Oh_M$ along $f$: for any open $V \subseteq N_b$, we define
$f_* \Oh_M(V) := \Oh_M(f^{-1}(V))$.

A key technical role will be played by submersions and fiber products of
supermanifolds. A map of supermanifolds $\pi \maps Y \to M$ is a \define{submersion}
if it is locally isomorphic to a projection map $p_1 \maps U \times V \to U$, where
$U$ and $V$ are super domains. It is immediate from this definition that any
submersion admits local sections. With a bit of work, we can show that pullbacks
along submersions exist in the category of supermanifolds. In particular, given a
submersion $\pi \maps Y \to M$, the \define{fiber square} $Y^{[2]} = Y \times_M Y$ is
defined to be the pullback of $\pi$ along itself:
\[
  \begin{tikzcd}
    Y \times_M Y \ar[r] \ar[d] \pullback  & Y \ar[d, "\pi"] \\
    Y \ar[r, "\pi"] & M
  \end{tikzcd}
\]
In fact, any \define{fiber power} $Y^{[p]}$ of $Y$ exists, and is defined to be the
fiber product $Y \times_M \cdots \times_M Y$ of $Y$ with itself $p$ times. Between
the fiber powers, we have the maps $\pi_i \maps Y^{[p+1]} \to Y^{[p]}$, the
projection that omits that $i$th factor. More generally, we have the maps
$\varpi_{i_1, i_2, \ldots, i_q} \maps Y^{[p]} \to Y^{[q]}$ that project onto the
$i_1$th, $i_2$th, \ldots, $i_q$th factors, in that order, for any $1 \leq i_1 < i_2 <
\cdots < i_q \leq p$.

\subsection{Bundles in supergeometry}

Line bundles on $M$ will play a central role in our story. As in algebraic geometry,
bundles in supergeometry correspond to modules: a \define{vector bundle} on a
supermanifold $M$ is a sheaf of $\Oh_M$-modules, locally free of finite rank. In
particular, a \define{complex line bundle} on $M$, called simply a \define{line
bundle} for short, is a sheaf $\L$ of modules of the complexified structure sheaf,
$\Oh_M \otimes \C$, locally free of rank $1|0$. This last condition means that any
point $x \in M_b$ lies in a neighborhood $U$ on which there is an even section $s \in
\L(U)$ such that $\L|_U = \Oh_\C \cdot s$. Here, we write $\Oh_\C$ for the
complexified structure sheaf $\Oh_M \otimes \C$, a notation we will use whenever $M$
is clear from the context. We call the section $s$ a \define{trivializing section}.

Given two vector bundles $\E$ and $\F$ on $M$, a \define{bundle map} $\varphi \maps
\E \to \F$ is a map of sheaves preserving the parity and compatible with the
$\Oh_M$-module structure. A \define{bundle isomorphism} is an invertible bundle map
whose inverse is a bundle map. A bundle map is a special case of a graded bundle
map: a \define{graded bundle map of parity $p$} is a map of sheaves $\varphi \maps \E
\to \F$ that shifts the parity of sections by $p$:
\[ \varphi \maps \E_i(U) \to \F_{i+p}(U) , \]
for any open $U \subseteq M_b$, and that respects the $\Oh_M$-module structure up to
sign:
\[ \varphi(fs) = (-1)^{p|f|} f \varphi(s), \mbox{ for all } f \in \Oh_M(U), \, s \in
  \E(U) . \]
The sheaf $\hom(\E,\F)$ of all graded bundle maps is itself a vector bundle over
$M$.

We can define the usual operations on vector bundles in supergeometry \cite{Manin}:
\begin{itemize}
\item[ ] {\bf Duals.} Given a vector bundle $\E$ on $M$, its \define{dual} $\E^*$ is
  the sheaf of graded bundle maps from $\E$ into the structure sheaf: $\E^* =
  \hom(\E, \Oh_M)$.
\item[ ] {\bf Tensor product.} Given two vector bundles $\E$ and $\F$ on $M$, their
  \define{tensor product} $\E \otimes \F$ is a vector bundle on $M$, defined to be
  the sheaf with the stalks
  \[ (\E \otimes \F)_x = \E_x \otimes_{\Oh_{M,x}} \F_x, \]
  for $x \in M_b$ any point in the body.
\item[ ] {\bf Pullback.} Given a map $f \maps M \to N$ of supermanifolds and a vector
  bundle $\E$ on $N$, the \define{pullback} $f^* \E$ is a vector bundle on $M$,
  defined to be the sheaf with the stalks
  \[ (f^* \E)_x = \E_{f(x)} \otimes_{\Oh_{N,f(x)}} \Oh_{M,x} , \]
  for $x \in M_b$ any point in the body. In this tensor product, we treat the stalk
  $\Oh_{M,x}$ as an $\Oh_{N,f(x)}$-module using the homomorphism $f^*$.
\end{itemize}

As in classical differential geometry, given a vector bundle $\E$ on $M$, we can
construct a supermanifold $E$, the \define{total space} of $\E$, and a smooth map $p
\maps E \to M$, the \define{bundle projection} \cite{DM}. We will not work with these
objects directly, but will use them as a kind of shorthand to remind the reader of
the parallels with differential geometry. Thus we will sometimes speak of a vector
bundle $E$ or $E \to M$, or of a line bundle $L$ or $L \to M$, when we mean sheaves
$\E$ and $\L$ of $\Oh_M$-modules or $\Oh_\C$-modules, respectively. Similarly, we
will write $E^* \to M$ for the dual of $\E$, $E \otimes F \to M$ for the tensor
product of $\E$ and $\F$, and $f^* E \to M$ for the pullback of $\E$ on $N$ along $f
\maps M \to N$, but in all cases we mean the constructions with sheaves described
above.

\subsection{The de Rham complex}

The de Rham complex on $M$ will also play a central role, so we give a lightning
overview of its construction. The \define{tangent bundle} $\T M$ of $M$ is the sheaf
of graded derivations of $\Oh_M$, and we refer to global sections of $\T M$ as
\define{vector fields} on $M$. The \define{cotangent bundle} is the dual sheaf
$\Omega^1 (M) = \hom(\T M, \Oh_M)$, and we refer to global sections of $\Omega^1 (M)$
as \define{1-forms} on $M$. We write the canonical pairing between vector fields and
1-forms as:
\[ \langle -, - \rangle \maps \T M \otimes \Omega^1(M) \to \Oh_M . \]
The \define{de Rham complex} is the $\Z$-graded sheaf
$\Omega^\bullet(M)$ of super vector spaces whose $p$th grade is the $p$th exterior
power of $\Omega^1(M)$ as an $\Oh_M$-module:
\[ \Omega^p(M) := \Lambda^p_{\Oh_M}\left(\Omega^1 (M) \right) . \]
Note that each grade $\Omega^p(M)$ is itself $\Z_2$-graded, so in fact the full
complex $\Omega^\bullet(M)$ is $\Z \times \Z_2$-graded. Both degrees encode important
information about a form, and we will often specify them in words. For instance, if
we were to write ``let $\alpha$ be an odd 2-form'' or ``let $\alpha$ be a 2-form of
odd parity'', we would mean in both cases that $\alpha$ is an odd section of
$\Omega^2(M)$. We will also write $\Omega^p_i$ for the sheaf of $p$-forms of parity
$i$. Finally, we will also have occasion to think about the complexified de Rham
complex, $\Omega^p_\C = \Omega^p \otimes \C$, and its even part, $\Omega^p_{\C,0}$.

The full complex $\Omega^\bullet(M)$ is an algebra under the wedge product---in fact,
it is a $\Z \times \Z_2$-graded commutative algebra, so that $\alpha \wedge \beta =
(-1)^{pq + p'q'} \beta \wedge \alpha$, for $\alpha$ a $p$-form of parity $p'$ and
$\beta$ a $q$-form of parity $q'$. As classically, there is a differential $d \maps
\Oh_M \to \Omega^1(M)$ defined by
\[ \langle X, df \rangle = Xf , \]
for $X$ a local vector field and $f$ in the structure sheaf. This extends to a
derivation $d$ on all of $\Omega^\bullet(M)$ such that $d^2 = 0$, the
\define{exterior derivative} making $\Omega^\bullet(M)$ into a complex. The exterior
derivative is defined to have bidegree $(1,0)$. Thus for any $p$-form $\alpha$, its
exterior derivative $d\alpha$ is a $(p+1)$-form of the same parity as $\alpha$, and
we have $d(\alpha \wedge \beta) = d\alpha \wedge \beta + (-1)^p \alpha \wedge d
\beta$ for any other form $\beta$.

We now prove a key lemma concerning the relationship between the de Rham complex and
fiber powers of a submersion. This will play a central role when we work with
connections on bundle gerbes.

The following proof briefly invokes `$Z$-points' of the fiber power $Y^{[q]}$. For
any supermanifold $Z$, a \define{$Z$-point} $(y_1, y_2, \ldots, y_q) \maps Z \to
Y^{[q]}$ is simply a map into $Y^{[q]}$ from $Z$.  By the universal property of the
fiber product, any map into $Y^{[q]}$ decomposes into a $q$-tuple of maps $y_i \maps
Z \to Y$, such that $\pi y_1 = \pi y_2 = \cdots = \pi y_q$. We have anticipated this
in our notation by writing the $Z$-point as a $q$-tuple $(y_1, y_2, \ldots,
y_q)$. The utility of $Z$-points comes from the Yoneda lemma \cite{Riehl}; the Yoneda
Lemma tells us that in any category $C$, we can construct a morphism $f \maps x \to
y$ between two objects $x, y \in C$ by defining a function between the sets of
$z$-points $\Hom(z, x) \to \Hom(z,y)$ for any object $z \in C$, so long as these
functions fit together into a natural transformation from the functor $\Hom(-,x)$
to the functor $\Hom(-,y)$.

\begin{lem}
  \label{lem:murray}
  Let $\pi \maps Y \to M$ be a submersion, where $Y$ and $M$ are supermanifolds. Then
  for any fixed natural number $p$, the following complex of $p$-forms is exact:
  \[ 0 \longrightarrow \Omega^p(M) \stackrel{\pi^*}{\longrightarrow} \Omega^p(Y)
    \stackrel{\delta}{\longrightarrow} \Omega^p(Y^{[2]})
    \stackrel{\delta}{\longrightarrow} \cdots \stackrel{\delta}{\longrightarrow}
    \Omega^p(Y^{[q]}) \stackrel{\delta}{\longrightarrow} \cdots \]
  Here the differential $\delta \maps \Omega^p(Y^{[q]}) \to \Omega^p(Y^{[q+1]})$ is
  given by the sum $\displaystyle \delta = \sum_{i=1}^{q+1} (-1)^{i+1}\pi_i^*$, where
  $\pi_i \maps Y^{[q+1]} \to Y^{[q]}$ is the projection that omits that $i$th
  factor. 
\end{lem}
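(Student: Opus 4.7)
My plan is to adapt the classical Murray-lemma argument to the supermanifold setting, using the $Z$-point formalism suggested in the paragraph preceding the lemma. The strategy is to construct a contracting homotopy locally from sections of $\pi$, and then patch it via a partition of unity on the body $M_b$. The argument splits into a local step (manufacturing the contracting homotopy from a section) and a global step (patching).

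First I would check $\delta^2 = 0$, which is immediate from the simplicial identities $\pi_i \pi_j = \pi_{j-1}\pi_i$ for $i<j$ satisfied by the face projections. Next, I would choose an open cover $\{U_\alpha\}$ of $M_b$ admitting local sections $s_\alpha \maps U_\alpha \to Y$ of $\pi$; these exist because $\pi$ is a submersion. Using each $s_\alpha$, I define an insertion map $\sigma_\alpha \maps Y^{[q]}|_{U_\alpha} \to Y^{[q+1]}|_{U_\alpha}$ on $Z$-points by
\[ (y_1, \ldots, y_q) \longmapsto (s_\alpha(\pi y_1),\, y_1, \ldots, y_q), \]
using that $\pi y_1 = \cdots = \pi y_q$ factors through $U_\alpha$. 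A direct $Z$-point computation then gives $\pi_1 \sigma_\alpha = \mathrm{id}$ and $\pi_j \sigma_\alpha = \sigma_\alpha \pi_{j-1}$ for $j \geq 2$ (the latter using $\pi y_1 = \pi y_2$ on fiber powers). Pulling back forms, a standard simplicial bookkeeping then yields the local chain homotopy identity
\[ \sigma_\alpha^* \delta + \delta \sigma_\alpha^* = \mathrm{id} \quad \text{on } \Omega^p(Y^{[q+1]}|_{U_\alpha}), \]
together with $s_\alpha^* \pi^* = \mathrm{id}$ on $\Omega^p(U_\alpha)$, which establishes exactness over each $U_\alpha$.

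To globalize, I would choose a partition of unity $\{\rho_\alpha\}$ on $M_b$ subordinate to $\{U_\alpha\}$, which exists since $M_b$ is paracompact and Hausdorff. For each $q \geq 1$, define
\[ h(\omega) \;:=\; \sum_\alpha (\pi^{[q]})^{*}(\rho_\alpha) \cdot \sigma_\alpha^*(\omega|_{U_\alpha}), \]
extended by zero where $\rho_\alpha$ vanishes. The calculation hinges on the identity $\pi_j^{*}(\pi^{[q]})^{*}\rho_\alpha = (\pi^{[q+1]})^{*}\rho_\alpha$, which follows from $\pi^{[q]} \circ \pi_j = \pi^{[q+1]}$; this lets multiplication by $(\pi^{[q]})^{*}\rho_\alpha$ slide through $\delta$. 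Combined with $\sum_\alpha \rho_\alpha = 1$, the local identity integrates to $h\delta + \delta h = \mathrm{id}$ on global sections, giving exactness at each $\Omega^p(Y^{[q]})$ with $q \geq 1$. For the initial spot, $\eta := \sum_\alpha \rho_\alpha \cdot s_\alpha^* \omega$ furnishes a global preimage under $\pi^{*}$ whenever $\delta \omega = 0$.

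I expect the only genuine subtlety to be the patching step: one must verify that the extension-by-zero of $(\pi^{[q]})^{*}\rho_\alpha \cdot \sigma_\alpha^*\omega$ off its natural domain is a bona fide smooth super $p$-form, and that pulling back the partition of unity along $\pi^{[q]}$ gives a partition of unity on $Y^{[q]}$ in the appropriate sheaf-of-even-functions sense. Both follow from the fact that the body $M_b$ carries smooth partitions of unity, which lift to partitions of unity in $\Oh_M$, so the classical Murray argument transplants to the super setting without essential change.
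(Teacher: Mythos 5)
Your proposal is correct and follows essentially the same route as the paper's proof: the contracting homotopy $\sigma_\alpha^*\delta + \delta\sigma_\alpha^* = \mathrm{id}$ built from a (local) section is exactly the paper's identity $s_q^*\delta = 1 - \delta s_{q-1}^*$, and the globalization via a partition of unity pulled back from $M$, using that multiplication by the pulled-back $\rho_\alpha$ commutes with $\delta$, is the same patching argument (the paper merely phrases it as patching local primitives of a given $\delta$-closed form rather than assembling a global homotopy operator $h$). No gaps.
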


\begin{proof}
  The following proof adapts an argument due to Michael Murray \cite{Murray1} to the
  context of supermanifolds.
  
  First, it is not hard to check that $\delta \pi^*$ = 0 and more generally,
  $\delta^2 = 0$, so $\Omega^p(Y^{[\bullet]})$ is indeed a complex. To prove
  exactness, we will first assume that $\pi \maps Y \to M$ admits a global section,
  $s \maps M \to Y$. Afterwards, we will generalize to the case without a global
  section by using a partition of unity. In all of the following, we adopt the
  convention that $Y^{[0]} = M$ and $\delta = \pi^*$ at this stage of the complex. 

  Given a global section $s \maps M \to Y$, we get a map $s_q \maps Y^{[q]} \to
  Y^{[q+1]}$ defined by $s_q(y_1, \ldots, y_q) = (s(\pi(y_1)), y_1, \ldots, y_q)$,
  for any $Z$-point $(y_1, \ldots, y_q) \maps Z \to Y^{[q]}$. Let $\omega \in
  \Omega^p(Y^{[q]})$ be $\delta$-closed:
  \[ \delta \omega = 0 . \]
  We claim that $\omega = \delta s_{q-1}^* \omega$, proving exactness. Indeed, using
  $Z$-points it is easy to check that $\pi_1 s_q = 1_{Y^{[q]}}$ and $\pi_i s_q =
  s_{q-1} \pi_{i-1}$ for $i \geq 2$. Thus, by the definition of $\delta$ we have:
  \[ s_q^* \delta = \sum_{i = 1}^{q+1} (-1)^{i+1} s_q^* \pi_i^* = 1 - \sum_{i =
    2}^{q+1} (-1)^i \pi_{i-1}^* s_{q-1}^* = 1 - \delta s_{q-1}^* . \]
  So, because $\delta \omega = 0$, we must have $s_q^* \delta \omega = 0$, which
  implies $\omega - \delta s_{q-1}^* \omega = 0$ by the above calculation. Thus
  $\omega = \delta s_{q-1}^* \omega$, as claimed.

  Now let us consider the case where we only have local sections. Fix an open
  cover $\{ U_\alpha \}_{\alpha \in I}$ of $M$ and local sections $s_\alpha \maps
  U_\alpha \to Y$ of $\pi$. Over each open $U_\alpha$, define $Y_\alpha =
  \pi^{-1}(U_\alpha)$. Then the restriction $\pi \maps Y_\alpha \to U_\alpha$ is a
  surjective submersion with a global section, and we know by our work above that the
  following complex is exact:
  \[ 0 \longrightarrow \Omega^p(U_\alpha) \stackrel{\pi^*}{\longrightarrow}
    \Omega^p(Y_\alpha) \stackrel{\delta}{\longrightarrow} \Omega^p(Y_\alpha^{[2]})
    \stackrel{\delta}{\longrightarrow} \cdots \stackrel{\delta}{\longrightarrow}
    \Omega^p(Y_\alpha^{[q]}) \stackrel{\delta}{\longrightarrow} \cdots \]
  To extend this result to the full complex, choose a partition of unity
  $\{\varphi_\alpha \in \Oh(M) \}_{\alpha \in I}$ on $M$ subordinate to our cover $\{
  U_\alpha\}_{\alpha \in I}$, where $\Oh(M)$ denotes the global sections of the
  structure sheaf $\Oh_M$. Each fiber power $Y^{[q]}$ projects to $M$, and by pulling
  our partition of unity back along this projection, we get a partition of unity on
  $Y^{[q]}$ subordinate to the cover $\{Y^{[q]}_\alpha\}_{\alpha \in I}$. Let us
  abuse notation slightly and denote the pullback partition of unity by $\{
  \varphi_\alpha\}_{\alpha \in I}$ as well. Because they were constructed by
  pullback, multiplication by elements in our partition of unity commutes with
  $\delta$.

  Now let $\omega \in \Omega^p(Y^{[q]})$ be $\delta$-closed:
  \[ \delta \omega = 0 . \]
  Restricting $\omega$ to $Y^{[q]}_\alpha$, we get $\omega_\alpha \in
  \Omega^p(Y^{[q]}_\alpha)$, which is also $\delta$-closed, and thus $\omega_\alpha =
  \delta \rho_\alpha$ by exactness. Patching the $\rho_\alpha$ together using our
  partition of unity, it is immediate that \mbox{$\rho = \sum \varphi_\alpha
  \rho_\alpha$} satisfies $\omega = \delta \rho$.
\end{proof}

\subsection{Connections and curvature}

A \define{connection} $\nabla$ on a line bundle $\L$ on a supermanifold $M$ is a rule
that, for any vector field $X$ on $M$, gives a map of sheaves of super vector spaces,
$\nabla_X \maps \L \to \L$. This rule is $\Oh_M$-linear in $X$:
\[ \nabla_{fX} s = f \nabla_X s , \]
for any local section $f$ of the structure sheaf $\Oh_M$, and it satisfies the
Leibniz rule in a graded sense:
\[ \nabla_X (fs) = (Xf) s + (-1)^{|X||f|} f \nabla_X s , \]
for any local section $s$ of $\L$ and section $f$ of the complexified structure sheaf
$\Oh_\C$. The \define{curvature} $F$ of $\nabla$ is defined by the graded analogue of
the usual formula, namely:
\[ F(X,Y) s = \left( \nabla_X \nabla_Y - (-1)^{|X||Y|} \nabla_Y \nabla_X - \nabla_{[X,Y]} \right)
  s , \]
for any local section $s$ of $\L$. It is a complex-valued 2-form of even parity, $F
\in \ECDR^2(M)$.

\subsection{The body of a supermanifold}

In supermanifold theory, essentially all questions that concern the topology of $M$
reduce to questions on the body $M_b$, and we now introduce maps to pass between $M$
and its body. Despite $M_b$ being merely a topological manifold in our definition, it
in fact carries a smooth structure. This arises as follows: let $\J_M \subseteq
\Oh_M$ be the ideal generated by the odd subsheaf, $(\Oh_M)_1$. Then $(M_b,
\Oh_M/\J_M)$ is an ordinary smooth manifold: on each chart $(U, \varphi, \phi)$ of
our atlas, $\J_M|_U$ is generated by the odd coordinates $\{\theta_j\}$, so when we
quotient by this ideal we find that $(\Oh_M/\J_M)|_U$ is isomorphic to the sheaf of
smooth functions on $\varphi(U)$, giving $M_b$ a smooth structure. The inclusion map
$i \maps M_b \inclusion M$ is the map of supermanifolds that corresponds to the
quotient map of structure sheaves, $\Oh_M \to \Oh_M/\J_M$.

One way the topology of the body appears in our work is via open covers. An
\define{open cover} $\U = \{ U_\alpha \}$ of a supermanifold $M$ is an open cover of
the body $M_b$, where we equip each open $U_\alpha \subseteq M_b$ with the
supermanifold structure $(U_\alpha, \Oh_M|_{U_\alpha})$ given by restricting the
structure sheaf. We say that $\U$ is a \define{good cover} of $M$ if it is a good
cover of $M_b$, in the sense that each nonempty finite intersection:
\[ U_{\alpha_1 \alpha_2 \cdots \alpha_k} = U_{\alpha_1} \cap U_{\alpha_2} \cap \cdots
  \cap U_{\alpha_k} \]
is diffeomorphic to $\R^m$, for $m$ the dimension of $M_b$.

Henceforth, we shall always regard the body $M_b$ as a smooth submanifold of
$M$. Note that the inclusion of the body, $i \maps M_b \inclusion M$, is canonical
for any supermanifold, but a map from $M$ to its body is not. We say a smooth map $p
\maps M \to M_b$ is a \define{body projection} if $p i = 1_{M_b}$.  It turns out that
such a map always exists \cite{Batchelor, Gawedzki}, but it is a choice of extra data.

\section{Bundle gerbes}
\label{sec:gerbes}

The notion of bundle gerbe is due to Michael Murray \cite{Murray}. It is a geometric
object defined over a base manifold whose characteristic class sits in the third
cohomology of the base. It is analogous to a complex line bundle, and its
characteristic class is an analogue of the Chern class in degree 2. A line bundle can
be equipped with a connection, and when this is done the curvature of the connection
is a closed 2-form and a de Rham representative of the Chern class. Analogously, a
bundle gerbe can also be equipped with a connection, and when this is done the
curvature is a closed 3-form and a de Rham representative the characteristic class.

Bundle gerbes are just one of many constructions that geometrize 3rd degree
cohomology. Shortly before Murray's work, Brylinski developed the differential
geometry of objects called `gerbes', by which he meant a sheaf of groupoids
\cite{Brylinski}. In this work, Brylinski was using the original notion of gerbe due
to Giraud \cite{Giraud}, conceived as a formulation of nonabelian
cohomology. Contemporaneous with Giraud, Deligne developed the tool now called
Deligne cohomology \cite[Sec. 2.2]{Deligne}, the cohomology theory that we shall see
classifies bundle gerbes with connection. A few years after Murray, Chatterjee and
Hitchin worked with a restricted class of bundle gerbes they called `gerbs'
\cite{Chatterjee, Hitchin}.

In Sections \ref{sec:noconn} and \ref{sec:conn}, we invoke a dictionary between
classical differential geometry and supergeometry to translate the definitions and
theorems of Murray \cite{Murray} and Murray--Stevenson \cite{MurrayStevenson} to
supergeometric language. To my knowledge, this dictionary has not been made explicit
before, but it is apparent in the work of Kostant \cite{Kostant} on line bundles in
supergeometry. To wit, a line bundle with connection on an ordinary manifold $X$ has
transition functions given by local sections of the sheaf $\underline{\C}^*_X$ of
smooth functions valued in the nonzero complex numbers, and its connection is locally
a complex-valued 1-form. On the other hand, a line bundle with connection on a
supermanifold $M$ has transition elements given by local sections of $\Oh^*_\C$, the
subsheaf of even, invertible elements of the complexified structure sheaf, $\Oh_\C =
\C \otimes \Oh_M$, and the connection is locally an even, complex-valued
1-form. Bundle gerbes involve higher degree forms, but the dictionary is the same:
replace the sheaf of smooth functions $\underline{\C}^*_X$ with $\Oh_\C^*$, and the
sheaf of complexified de Rham forms $\Omega^\bullet_\C$ with its even part
$\ECDR^\bullet$.

\subsection{Without connection}
\label{sec:noconn}

Recall from Section \ref{sec:superman} that for any submersion $\pi \maps Y \to M$,
the fiber powers $Y^{[p]} = Y \times_M Y \times_M \cdots \times_M Y$ ($p$ times)
exist as supermanifolds. Between the fiber powers, we have the maps $\pi_i \maps
Y^{[p]} \to Y^{[p-1]}$ which omit the $i$th factor, and more generally we have the
maps $\varpi_{i_1, i_2, \ldots, i_q} \maps Y^{[p]} \to Y^{[q]}$ that project onto the
$i_1$th, $i_2$th, \ldots, $i_q$th factors, in that order, for any $1 \leq i_1 < i_2 <
\cdots < i_q \leq p$.

\begin{defn}
  A \define{bundle gerbe} on the supermanifold $M$ consists of the following data:
  \begin{itemize}
  \item a submersion, $\pi \maps Y \to M$.
  \item a complex line bundle over the fiber square of $Y$:
    \[
      \begin{tikzcd}
        L \ar{d} & \\
        Y^{[2]} \ar[r, shift left, "\pi_1"] \ar[r, shift right, "\pi_2"'] & Y \ar[d, "\pi"] \\
        & M . \\
      \end{tikzcd}
    \]
  \item a line bundle isomorphism, the bundle gerbe \define{multiplication}, $\mu
    \maps \pi_3^* L \otimes \pi_1^* L \to \pi_2^* L$ over $Y^{[3]}$. Note that we can
    equivalently write this as $\mu \maps \varpi_{12}^* L \otimes \varpi_{23}^* L \to
    \varpi_{13}^* L$.
  \item The bundle gerbe multiplication satisfies an associativity condition over
    $Y^{[4]}$: using $\mu$, we can construct two line bundle isomorphisms over
    $Y^{[4]}$ from the line bundle $\varpi^*_{12} L \otimes \varpi^*_{23} L \otimes
    \varpi^*_{34} L$ to the line bundle $\varpi^*_{14} L$. These two maps are required to
    be equal. Explicitly, the following square commutes:
    \[
      \begin{tikzcd}
        \varpi^*_{12} L \otimes \varpi^*_{23} L \otimes \varpi^*_{34} L \ar[r, "\varpi_{123}^* \mu \otimes 1"] \ar[d, "1 \otimes \mu"] & \varpi^*_{13} L \otimes \varpi^*_{34} L \ar[d, "\varpi_{134}^* \mu"] \\
        \varpi^*_{12} L \otimes \varpi^*_{24} L \ar[r, "\varpi_{124}^* \mu"] & \varpi^*_{14} L .
      \end{tikzcd} 
    \]
  \end{itemize}
\end{defn}

\noindent
We denote a bundle gerbe with surjective submersion $Y$, line bundle $L$ and
multiplication $\mu$ by the triple $(Y, L, \mu)$, or more succinctly by a single
calligraphic letter such as $\G$. The following is the most automatic example of a
bundle gerbe, called the `trivial bundle gerbe':
\begin{ex}[The trivial bundle gerbe] \label{ex:triv}
  Given a submersion $\pi \maps Y \to M$, choose a line bundle $L$ over $Y$. Then we
  can construct a line bundle $\delta L$ on $Y^{[2]}$ as follows:
  \[ \delta L = \pi_1^* L \otimes \pi_2^* L^* , \]
  where we recall that $\pi_i \maps Y^{[2]} \to Y$ denotes the projection that omits
  the $i$th factor. There is a isomorphism of line bundles $\mu_{\rm can} \maps
  \pi_3^* \delta L \otimes \pi_1^* \delta L \to \pi_2^* \delta L$ over $Y^{[3]}$,
  induced from the canonical pairing between $L$ and its dual $L^*$, and it is a
  quick calculation to check that $\mu_{\rm can}$ is associative. Thus $(Y, \delta L,
  \mu_{\rm can})$ is a bundle gerbe over $M$. We call this \define{the trivial bundle
  gerbe} over $M$, and denote it by $\I$.  We shall also denote this gerbe by $\delta
  L$ when we wish to make note of the line bundle, though we shall see shortly that
  all trivial bundle gerbes over $M$ are equivalent to each other, independent of the
  choice of line bundle or submersion. Hence the notation $\I$ will be preferred.
\end{ex}

\noindent
All the usual operations on line bundles generalize to bundle gerbes:
\begin{itemize}
\item[ ] {\bf Duals.} Given a bundle gerbe $\G = (Y, L, \mu)$ over $M$, its
  \define{dual} $\G^*$ is the bundle gerbe $(Y, L^*, (\mu^{-1})^*)$ over $M$.
\item[ ] {\bf Tensor product.} Given two bundle gerbes $\G = (Y, L, \mu)$ and $\H =
  (Z, Q, \nu)$, the \define{tensor product} $\G \otimes \H$ is the bundle gerbe $(Y
  \times_M Z, L \otimes Q, \mu \otimes \nu)$. Here $L \otimes Q$ denotes the result
  of tensoring the pullbacks of $L$ and $Q$ to the fiber square $(Y \times_M
  Z)^{[2]}$, but we treat the pullbacks as implicit.
\item[ ] {\bf Pullback.} Given a bundle gerbe $\G = (Y, L, \mu)$ over the
  supermanifold $N$, and a map of supermanifolds $f \maps M \to N$, the
  \define{pullback} $f^* \G$ is the bundle gerbe $(f^* Y, f^* L, f^* \mu)$, where we
  have abused notation to write $f^* L$ for the pullback of the line bundle $L$ to
  $(f^* Y)^{[2]}$, and similarly for the multiplication.
\end{itemize}

\noindent
The collection of all bundle gerbes over $M$ forms a bicategory rather than a mere
category. In a full treatment, we would now describe the 1-morphisms and 2-morphisms
in this bicategory. This has been done by Stevenson \cite{Stevenson} and Waldorf
\cite{Waldorf} for bundle gerbes over smooth manifolds, however, and we do not
anticipate the 1- and 2-morphisms over supermanifolds to differ
significantly. Nonetheless, we will need one piece of information about the
bicategory of bundle gerbes: we need to know when two bundle gerbes over $M$ are
equivalent. We turn to this now.

A \define{trivialization} of a bundle gerbe $\G = (Y, L, \mu)$ is a choice of line
bundle $T \to Y$ and an isomorphism $\tau \maps L \to \delta T$ of line bundles over
$Y^{[2]}$ compatible with the bundle gerbe multiplication on $\G$ and $\delta T$. We
say that a bundle gerbe is \define{trivializable} if a trivialization exists.
\begin{defn}
  Two bundle gerbes $\G$ and $\H$ over $M$ are \define{equivalent} if $\G \otimes
  \H^*$ is trivializable. 
\end{defn}
\noindent
We write $\G \simeq \H$ when $\G$ and $\H$ are equivalent. Equivalence of bundle
gerbes is also called \define{stable isomorphism} in the literature. With this
definition in hand, it is easy to see that the trivial bundle gerbe is unique up to
equivalence.

\begin{prop}\label{prop:triv}
  Let $Y \to M$ and $Z \to M$ be submersions, and let $L$ and $Q$ be line bundles
  over $Y$ and $Z$, respectively. Then the trivial bundle gerbes $(Y, \delta L,
  \mu_{\rm can})$ and $(Z, \delta Q, \mu_{\rm can})$ are equivalent.
\end{prop}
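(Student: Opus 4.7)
The plan is to unfold the definition of equivalence and construct an explicit trivialization of $\G \otimes \H^*$, where I write $\G = (Y, \delta L, \mu_{\rm can})$ and $\H = (Z, \delta Q, \mu_{\rm can})$. By the tensor product construction, this gerbe has submersion $Y \times_M Z \to M$, line bundle $\delta L \otimes \delta Q^*$ over $(Y \times_M Z)^{[2]}$ (where I suppress the pullbacks along the obvious maps $(Y \times_M Z)^{[2]} \to Y^{[2]}$ and $(Y \times_M Z)^{[2]} \to Z^{[2]}$), and multiplication induced from $\mu_{\rm can}$ and its dual. I need to exhibit a line bundle $T$ on $Y \times_M Z$ together with an isomorphism $\tau \maps \delta L \otimes \delta Q^* \to \delta T$ compatible with the two bundle gerbe multiplications.

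The candidate is the obvious one: let $p_1 \maps Y \times_M Z \to Y$ and $p_2 \maps Y \times_M Z \to Z$ be the canonical projections, and set $T = p_1^* L \otimes p_2^* Q^*$. To construct $\tau$, I compute fibers at a $Z$-point $((y_1,z_1),(y_2,z_2))$ of $(Y \times_M Z)^{[2]}$: the bundle $\delta T$ gives $L_{y_2} \otimes Q_{z_2}^* \otimes L_{y_1}^* \otimes Q_{z_1}$, while $\delta L \otimes \delta Q^*$ gives $L_{y_2} \otimes L_{y_1}^* \otimes Q_{z_2}^* \otimes Q_{z_1}$. These agree up to a braiding of the middle factors, which in the graded setting introduces a sign that is trivial because the line bundles are of even rank $1|0$. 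I will define $\tau$ globally as this canonical reordering of tensor factors, and express it functorially using the Yoneda/$Z$-point formulation so there are no ambiguities.

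The remaining step is to verify that $\tau$ intertwines the bundle gerbe multiplications over $(Y \times_M Z)^{[3]}$. Both multiplications are ultimately assembled from the canonical evaluation pairings $L \otimes L^* \to \Oh_\C$ and $Q \otimes Q^* \to \Oh_\C$ applied in the obvious middle slots, so the required square commutes by the naturality of these pairings and of the braiding. This is essentially a tensor-diagram chase, and I would organize it by checking commutativity on $Z$-points $((y_1,z_1),(y_2,z_2),(y_3,z_3))$, where both sides reduce to the same contraction pattern.

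I expect the main obstacle to be bookkeeping rather than anything conceptual: carefully identifying the various pullbacks of $L$, $Q$, and $T$ through the two-step composite $(Y \times_M Z)^{[k]} \to Y^{[k]}$, $(Y \times_M Z)^{[k]} \to Z^{[k]}$, and managing the (trivial) signs from the graded symmetry isomorphism $Q_{z_2}^* \otimes L_{y_1}^* \cong L_{y_1}^* \otimes Q_{z_2}^*$. Once the $Z$-point descriptions are written down, the compatibility with $\mu_{\rm can} \otimes (\mu_{\rm can}^{-1})^*$ is automatic. As a bonus, this argument will also confirm the assertion from Example~\ref{ex:triv} that any trivial bundle gerbe is equivalent to the distinguished $\I$ built from the trivial line bundle on $M$ itself (take, for instance, $Z = M$ with $Q$ trivial).
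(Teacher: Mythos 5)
Your proposal is correct and is essentially the paper's own argument: the paper also takes the submersion $X = Y \times_M Z$ and the trivialization $\delta L \otimes (\delta Q)^* \iso \delta(L \otimes Q^*)$, i.e.\ your $T = p_1^* L \otimes p_2^* Q^*$ with the pullbacks left implicit. Your extra detail (the $Z$-point check of compatibility with $\mu_{\rm can}$ and the sign bookkeeping) just makes explicit what the paper leaves to the reader.
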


\begin{proof}
  Write $\delta L$ and $\delta Q$, respectively, for the above bundle gerbes. We
  merely need to show that $\delta L \otimes (\delta Q)^*$ is trivializable. The
  submersion for this tensor product of gerbes is $X = Y \times_M Z$, and in fact
  we have an isomorphism of line bundles
  \[ \delta L \otimes (\delta Q)^* \iso \delta( L \otimes Q^* ) \]
  over the fiber square, $X^{[2]}$. This is the desired trivialization.
\end{proof}

\begin{cor}
  \label{cor:triv}
  The trivial bundle gerbe $\I$ is equivalent to the following bundle gerbe:
  \[  (M, \Oh_\C, \mu_{\rm can}) \]
  where the submersion $M \to M$ is the identity, the line bundle over $M^{[2]} = M$
  is the trivial line bundle $\Oh_\C$, and the multiplication $\mu_{\rm can}$ is the
  usual multiplication on the complexified structure sheaf, $\Oh_\C$.
\end{cor}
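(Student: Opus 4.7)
The plan is to recognize the gerbe $(M, \Oh_\C, \mu_{\rm can})$ described in the statement as itself an instance of the trivial-gerbe construction of Example \ref{ex:triv}, and then simply invoke Proposition \ref{prop:triv} to conclude equivalence with any other trivial gerbe, in particular with $\I$.

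First, I would verify that the identity map $1_M \maps M \to M$ is a (trivially) submersion, so that the fiber powers exist and collapse canonically: $M^{[p]} = M$ for all $p$, and each projection $\pi_i$ or $\varpi_{i_1 \cdots i_q}$ is the identity on $M$. Taking the trivial line bundle $\Oh_\C \to M$, we then form its associated trivial gerbe from Example \ref{ex:triv}, namely
\[ \delta \Oh_\C = \pi_1^* \Oh_\C \otimes \pi_2^* \Oh_\C^* . \]
Under the identification $M^{[2]} = M$ with both $\pi_i$ the identity, and the canonical evaluation pairing $\Oh_\C \otimes \Oh_\C^* \isoto \Oh_\C$ (which sends a local section to its value on $1$), this is canonically isomorphic to $\Oh_\C$ itself.

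Second, I would check that under this canonical identification, the multiplication $\mu_{\rm can}$ inherited from Example \ref{ex:triv} — which by construction arises from contracting $L$ with $L^*$ — corresponds to ordinary multiplication of sections of $\Oh_\C$. Concretely, over $M^{[3]} = M$, the map
\[ \pi_3^* \delta \Oh_\C \otimes \pi_1^* \delta \Oh_\C \to \pi_2^* \delta \Oh_\C \]
becomes, after the identifications, the obvious cancellation $(s \otimes t^{-1}) \otimes (t \otimes u^{-1}) \mapsto s \otimes u^{-1}$, which on sections of $\Oh_\C$ is just ordinary multiplication. So the gerbe $(M, \Oh_\C, \mu_{\rm can})$ described in the statement coincides (on the nose) with $(M, \delta \Oh_\C, \mu_{\rm can})$ as produced by Example \ref{ex:triv}.

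Finally, I would apply Proposition \ref{prop:triv} with $(Z, Q) = (M, \Oh_\C)$ and $(Y, L)$ any data defining some other model of the trivial gerbe $\I$: the proposition gives $\I \simeq (M, \Oh_\C, \mu_{\rm can})$. There is no real obstacle here; the only thing to be careful about is the bookkeeping of canonical isomorphisms $M^{[p]} \cong M$, $\pi_i = 1_M$, and $\Oh_\C \otimes \Oh_\C^* \cong \Oh_\C$, and checking that $\mu_{\rm can}$ really does correspond to ordinary multiplication under these identifications, so that the corollary becomes a direct instance of the proposition.
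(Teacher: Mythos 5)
Your argument is correct and is exactly the route the paper intends: the corollary is stated without proof precisely because $(M,\Oh_\C,\mu_{\rm can})$ is the trivial gerbe of Example \ref{ex:triv} for the identity submersion and the trivial line bundle (via $\delta\Oh_\C = \Oh_\C\otimes\Oh_\C^* \iso \Oh_\C$, under which $\mu_{\rm can}$ becomes ordinary multiplication), so Proposition \ref{prop:triv} immediately gives the equivalence with $\I$. Your careful bookkeeping of the identifications $M^{[p]}\iso M$ and of the pairing is a sound, if slightly more explicit, version of the same argument.
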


Let $\mathbb{G}(M)$ denote the collection of all equivalence classes of bundle gerbes
on $M$. The tensor product operation equips $\mathbb{G}(M)$ with an abelian group
structure, with duals as inverses and the trivial bundle gerbe $\I$ as the
identity. In fact, we shall see that $\mathbb{G}(M)$ depends only on the topology of
the body:
\[ \mathbb{G}(M) \iso H^3(M_b, \Z) . \]
For a given bundle gerbe $\G$, the corresponding class in $H^3(M_b, \Z)$ is called
the `Dixmier--Douady class of $\G$', and denoted $\DD(\G)$. It is the analogue
for bundle gerbes of the Chern class of a line bundle. As in the line bundle case,
the Dixmier--Douady class is constructed using \Cech\ cohomology.

We perform the construction of $\DD(\G)$ for $\G = (Y, L, \mu)$ as follows. Choose a
good cover $\{U_\alpha\}$ of $M$ such that the submersion $\pi \maps Y \to M$
admits local sections $s_\alpha \maps U_\alpha \to Y$. On each double
intersection $U_{\alpha\beta} = U_\alpha \cap U_\beta$, the universal property of the
fiber product yields a map:
\[ (s_\alpha, s_\beta) \maps U_{\alpha \beta} \to Y^{[2]} . \]
We use the map $(s_\alpha, s_\beta)$ to pull back our line bundle $L$ to
$U_{\alpha \beta}$, yielding a line bundle $L_{\alpha \beta} = (s_\alpha, s_\beta)^*
L$. Because $U_{\alpha \beta}$ is contractible, $L_{\alpha \beta}$ is trivializable
\cite{Kostant}, so there is a section $\sigma_{\alpha \beta} \in \L_{\alpha
\beta}(U_{\alpha \beta})$ such that $\L_{\alpha \beta} = \Oh_\C \cdot
\sigma_{\alpha\beta}|_{U_{\alpha\beta}}$. On the triple intersection $U_{\alpha \beta
\gamma} = U_\alpha \cap U_\beta \cap U_\gamma$, the bundle gerbe multiplication
induces an isomorphism $\mu \maps L_{\alpha \beta} \otimes L_{\beta \gamma} \to
L_{\alpha \gamma}$. We thus have two sections of $L_{\alpha \gamma}$ over a triple
intersection, namely $\sigma_{\alpha \gamma}$ and $\mu(\sigma_{\alpha \beta} \otimes
\sigma_{\beta \gamma})$, both of which are trivializing. Thus we must have:
\[ \mu(\sigma_{\alpha \beta} \otimes \sigma_{\beta \gamma}) = g_{\alpha \beta \gamma}
  \sigma_{\alpha \gamma} , \]
for some even, invertible element of the complexified structure sheaf, $g_{\alpha
\beta \gamma} \in \Oh_\C(U_{\alpha \beta \gamma})$. The associativity of $\mu$
implies that $g_{\alpha \beta \gamma}$ is a \Cech\ 2-cocycle. If we write
$\Oh_\C^*$ for the subsheaf of even, invertible elements in the complexified
structure sheaf $\Oh_\C$, then we have constructed a class in the 2nd \Cech\
cohomology of this sheaf:
\[ [g_{\alpha \beta \gamma}] \in \check{H}^2(M_b, \Oh_\C^*) . \]
By standard arguments in \Cech\ cohomology, we can check that the class $[g_{\alpha
\beta \gamma}]$ is independent of the choices made.

Finally, we can construct an isomorphism $\check{H}^2(M_b, \Oh^*_\C) \iso
H^3(M_b, \Z)$, so that $[g_{\alpha \beta \gamma}]$ defines a class in
$H^3(M_b,\Z)$. This is the \define{Dixmier--Douady class of $\G$}, denoted
$\DD(\G)$. Thanks to the isomorphism with $\check{H}^2(M_b, \Oh^*_\C)$, we will
also think of $\DD(\G)$ as living in this group.

We construct the isomorphism $\check{H}^2(M_b, \Oh^*_\C) \iso H^3(M_b, \Z)$ in the
same way as one constructs the classical isomorphism $\check{H}^p(X,
\underline{\C}^*_X) \iso H^{p+1}(X, \Z)$, for $X$ a smooth manifold and
$\underline{\C}^*_X$ the sheaf of smooth functions valued in the invertible complex
numbers, $\C^*$. Specifically, we use a short exact sequence of sheaves on $M_b$:
\[ 0 \longrightarrow \Z_{M_b} \stackrel{2\pi i}{\longrightarrow} (\Oh_\C)_0
  \stackrel{\exp}{\longrightarrow} \Oh^*_\C \longrightarrow 0 . \]
Indeed, Kostant proved that if we define $\exp(f) = \sum_{n = 0}^\infty
\frac{f^n}{n!}$, then this series converges in a suitable topology and defines a
homomorphism $\exp \maps (\Oh_\C)_0(U) \to \Oh_\C^*(U)$ for any open set $U$, which
is surjective when $U$ is simply connected. Moreover, he showed this descends to a
sheaf homomorphism $\exp \maps (\Oh_\C)_0 \to \Oh^*_\C$, with kernel given by the
constant sheaf on the integers \cite{Kostant}.

With this short exact sequence of sheaves in hand, we get a long exact sequence upon
passing to sheaf cohomology. Because $(\Oh_\C)_0$ is a soft sheaf, the connecting
homomorphism provides the desired isomorphism $\check{H}^2(M_b, \Oh^*_\C) \iso
H^3(M_b, \Z)$.

To close this circle of ideas, we now prove that the Dixmier--Douady class just
constructed is a complete invariant of a bundle gerbe up to equivalence. This is the
content of the following theorem. Its proof follows the proof of the analogous result
over smooth manifolds by Murray \cite{Murray} and Murray--Stevenson
\cite{MurrayStevenson}, adapted to the language of structure sheaves.

\begin{thm} \label{thm:cech} Let $M$ be a supermanifold, $\mathbb{G}(M)$ the abelian
  group of equivalence classes of bundle gerbes over $M$, and let $\DD \maps
  \mathbb{G}(M) \to \check{H}^2(M, \Oh^*_\C)$ be the map sending the equivalence
  class of a bundle gerbe on $M$ to its Dixmier--Douady class, regarded as a \Cech\
  cohomology class. Then:
  \begin{enumerate}
  \item $\DD \maps \mathbb{G}(M) \to \check{H}^2(M, \Oh^*_\C)$ is an isomorphism of
    abelian groups, i.e., 
    \begin{itemize}
    \item $\DD$ is onto;
    \item $\DD(\G) = 0$ if and only if $\G$ is trivial;
    \item $\DD( \G \otimes \H ) = \DD(\G) + \DD(\H)$;
    \item $\DD(\G^*) = -\DD(\G)$;
    \end{itemize}
    where $\G$ and $\H$ are bundle gerbes on $M$. 
  \item If $f \maps M \to N$ is a map of supermanifolds, then $\DD$ is natural with respect
    to pullback along $f$:
    \[ \DD(f^* \G) = f^* \DD(\G), \]
    for any bundle gerbe $\G$ on $N$.
  \end{enumerate}
\end{thm}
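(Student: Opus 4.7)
The plan is to adapt the proofs of Murray \cite{Murray} and Murray--Stevenson \cite{MurrayStevenson} via the dictionary outlined at the start of this section: replace $\underline{\C}^*_X$ with $\Oh^*_\C$ throughout, using Kostant's result (already invoked for $L_{\alpha\beta}$) that line bundles on contractible super domains admit trivializing even sections. Before addressing the four bulleted items, one must verify that $\DD$ is well-defined on equivalence classes. Standard \Cech\ manipulations show that different choices of good cover, of local sections $s_\alpha$ of $\pi$, and of trivializing sections $\sigma_{\alpha\beta}$ of $L_{\alpha\beta}$ alter the cocycle $g_{\alpha\beta\gamma}$ only by a coboundary; and an equivalence $\G \otimes \H^* \simeq \I$ translates into a cohomology between the respective cocycles.

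For surjectivity, given a cocycle $g_{\alpha\beta\gamma} \in \Oh^*_\C(U_{\alpha\beta\gamma})$ on a good cover $\{U_\alpha\}$ of $M$, take $Y = \bigsqcup_\alpha U_\alpha$ with the canonical submersion, so that $Y^{[p]} = \bigsqcup U_{\alpha_1 \cdots \alpha_p}$. Declare $L$ to be the trivial line bundle on each component of $Y^{[2]}$, and define the bundle gerbe multiplication over $Y^{[3]}$ componentwise as scalar multiplication by $g_{\alpha\beta\gamma}$. Associativity of $\mu$ is exactly the 2-cocycle identity, and unwinding the construction of $\DD$ returns $[g_{\alpha\beta\gamma}]$.

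The step I expect to be the main obstacle is the triviality criterion: $\DD(\G) = 0$ implies $\G$ is trivializable. Given $g_{\alpha\beta\gamma} = h_{\beta\gamma} h_{\alpha\gamma}^{-1} h_{\alpha\beta}$ for some $h_{\alpha\beta} \in \Oh^*_\C(U_{\alpha\beta})$, the plan is to construct a line bundle $T \to Y$ together with a bundle gerbe isomorphism $\tau \maps L \to \delta T$. Over each $s_\alpha(U_\alpha) \subseteq Y$ declare $T$ trivial, and prescribe transitions on $Y^{[2]}$ from the rescaled trivializations $h_{\alpha\beta}^{-1}\sigma_{\alpha\beta}$ of $L_{\alpha\beta}$. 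The coboundary equation $\delta h = g$, combined with the associativity of $\mu$, forces the resulting transitions to satisfy the line-bundle cocycle condition, so that the isomorphism $\tau$ can be assembled from the rescaled data. Because the images of the $s_\alpha$ need not cover $Y$, the actual construction is carried out on the refinement $Y \times_M \bigsqcup_\alpha U_\alpha \to Y$, where the rescaled trivializing data is globally available, and $T$ is then recovered on $Y$ by descent along this submersion---this is the essential supergeometric content, following the template of Murray--Stevenson once one verifies line-bundle descent along submersions of supermanifolds, which reduces via Lemma \ref{lem:murray} (or its analogue for $\Oh^*_\C$-valued cocycles) to an exactness statement for a \Cech--submersion double complex provable by the same partition-of-unity technique.

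The remaining three bullets are routine. Choosing common local sections for $\G$ and $\H$ by passing to the fiber product of their submersions, the cocycle of $\G \otimes \H$ is the product $g^\G_{\alpha\beta\gamma} \cdot g^\H_{\alpha\beta\gamma}$, giving $\DD(\G \otimes \H) = \DD(\G) + \DD(\H)$ in the additive notation on $\check{H}^2(M, \Oh^*_\C)$; similarly the cocycle of $\G^*$ is the inverse cocycle, yielding $\DD(\G^*) = -\DD(\G)$. Finally, for a map $f \maps M \to N$ of supermanifolds, pulling back a good cover of $N$ (refining as needed to obtain a good cover of $M$) and pulling back the local sections of $Y$ to local sections of $f^*Y$, the cocycle of $f^*\G$ is by construction the pullback of the cocycle of $\G$, so $\DD(f^*\G) = f^*\DD(\G)$.
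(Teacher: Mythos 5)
Most of your outline coincides with the paper's argument: well-definedness and naturality by standard \Cech\ manipulations, surjectivity by taking $Y = \coprod_\alpha U_\alpha$ with the trivial line bundle on $Y^{[2]}$ and multiplication given by $g_{\alpha\beta\gamma}$, and the homomorphism properties by computing with representatives. The gap is precisely in the step you flag as the main obstacle, injectivity. Your construction of the trivializing bundle is not viable as described: you ``declare $T$ trivial'' over $s_\alpha(U_\alpha)$, and then over the pieces of $Y \times_M \coprod_\alpha U_\alpha = \coprod_\alpha Y_\alpha$ with $Y_\alpha = \pi^{-1}(U_\alpha)$, but a trivializing bundle for $\G$ restricted over $Y_\alpha$ is in general a \emph{nontrivial} line bundle, since the fibres of $\pi$ can carry topology even though $U_\alpha$ is contractible (take $M$ a point and $L = \delta T$ with $T$ nontrivial on $Y$). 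Moreover $h_{\alpha\beta}^{-1}\sigma_{\alpha\beta}$ is a section of $L_{\alpha\beta}$, not a function on $Y_{\alpha\beta}$, so it only becomes transition data after identifying it with a section of $T_\alpha \otimes T_\beta^*$ for the correct local models; and if you do force trivial local models glued by functions pulled back from $U_{\alpha\beta}$, the result is a pullback $\pi^*K$ from $M$, whose $\delta$ is canonically trivial and hence cannot be isomorphic to $L$ in general. The missing idea, which is how the paper proceeds, is to take as local model the pullback of $L$ itself: set $T_\alpha := (s_\alpha\pi, i)^* L$ on $Y_\alpha$, where $(s_\alpha\pi, i)\maps Y_\alpha \to Y^{[2]}$; after rescaling $\sigma_{\alpha\beta} \mapsto f_{\alpha\beta}^{-1}\sigma_{\alpha\beta}$ so that $g_{\alpha\beta\gamma} = 1$, the sections $\sigma_{\alpha\beta}$ together with $\mu$ induce isomorphisms $\tilde\sigma_{\alpha\beta}\maps T_\alpha \to T_\beta$ satisfying the cocycle identity, so the $T_\alpha$ glue to a line bundle $T$ on $Y$ with $L \iso \delta T$ compatibly with the multiplication.

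Relatedly, the appeal to ``line-bundle descent along submersions of supermanifolds'' proved via Lemma \ref{lem:murray} is both unnecessary and unjustified. The submersion $\coprod_\alpha Y_\alpha \to Y$ is just an open cover of $Y$, so assembling $T$ from the $T_\alpha$ is ordinary sheaf gluing and needs no descent theorem; and Lemma \ref{lem:murray} is an additive statement about $p$-forms proved with a partition of unity, which does not by the same technique yield a multiplicative exactness statement for $\Oh^*_\C$-valued cocycles or descent of line bundles, so it cannot carry the weight you assign to it. Once the ``trivial local models plus descent'' step is replaced by the pullback local models $T_\alpha = (s_\alpha\pi, i)^*L$ glued over the open cover $\{Y_\alpha\}$ as above, the remainder of your outline goes through as in the paper.
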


\begin{proof}
  Let $\G = (Y, L, \mu)$ be a bundle gerbe on $M$, and let $\DD(\G) = [g_{\alpha
  \beta \gamma}]$ be the \Cech\ cohomology class constructed as above using a good
  cover $\{U_\alpha\}$, sections $s_\alpha \maps U_\alpha \to Y$, and trivializing
  sections $\sigma_{\alpha \beta} \in \L_{\alpha\beta}(U_{\alpha\beta})$. As we noted
  above, the class $\DD(\G)$ is independent of the choices made by standard arguments
  in \Cech\ cohomology.

  The fact that $\DD$ respects pullbacks is also standard: construct $\{ g_{\alpha
  \beta \gamma} \}$ using a good cover of $\{U_\alpha\}$ of $N$, and choose a good
  cover $\{V_\delta\}$ on $M$ that is a refinement of the cover
  $\{f^{-1}(U_\alpha)\}$. The pullback of $\{ g_{\alpha \beta \gamma} \}$ to this
  refinement represents the class of $\DD(f^* \G)$, as desired.

  Next, we wish to show that $\DD$ is an isomorphism of abelian groups. First of all,
  the fact that $\DD$ is a group homomorphism:
  \[ \DD(\G \otimes \H) = \DD(\G) + \DD(\H) , \]
  is a quick calculation with \Cech\ representatives, for any two bundle gerbes $\G$
  and $\H$, taking care to write the group operation additively: $[g_{\alpha \beta
  \gamma}] + [h_{\alpha \beta \gamma}] = [ g_{\alpha \beta \gamma} h_{\alpha \beta
  \gamma}]$.  From this it follows that $\DD(\I) = 0$ for the trivial bundle gerbe
  $\I$ and that $\DD(\G^*) = -\DD(\G)$.

  So, it remains to check that $\DD$ is a bijection. To show it is onto, fix a class
  $\underline{g} \in \check{H}^2(M_b, \Oh^*_\C)$, and choose a good cover
  $\{U_\alpha\}$ for which $\underline{g}$ is represented by elements $g_{\alpha
  \beta \gamma} \in \Oh_\C(U_{\alpha \beta \gamma})$ satisfying the \Cech\ 2-cocycle
  condition. Let our submersion be $Y = \coprod (U_\alpha, \Oh|_{U_\alpha})$ with the
  obvious projection to $M$. Then the fiber square is $Y^{[2]} = \coprod
  (U_{\alpha \beta}, \Oh|_{U_{\alpha \beta}})$. For our bundle gerbe $\G$, we take
  the trivial line bundle on $Y^{[2]}$ and use $\underline{g}$ to define the bundle
  gerbe multiplication over $Y^{[3]}$, which is associative because $g_{\alpha \beta
  \gamma}$ is a 2-cocycle. We have that $\DD(\G) = \underline{g}$ by construction.

  Finally, to show that $\DD$ is one-to-one, we check that it has trivial
  kernel. That is, $\DD(\G) = 0$ implies $\G$ has a trivialization. Indeed, $\DD(\G)
  = 0$ means our \Cech\ 2-cocycle $\{g_{\alpha \beta \gamma}\}$ is a coboundary:
  \[ g_{\alpha \beta \gamma} = f_{\alpha \beta} f_{\beta \gamma} f_{\gamma \alpha}
    , \]
  for some even, invertible sections $f_{\alpha \beta} \in \Oh_\C(U_{\alpha
  \beta})$. In fact, without loss of generality, we can assume $g_{\alpha \beta
  \gamma} = 1$. Simply replace the trivializing sections $\sigma_{\alpha \beta} \in
  \L_{\alpha \beta}(U_{\alpha \beta})$ with $\frac{1}{f_{\alpha \beta}}
  \sigma_{\alpha \beta}$. For these trivializing sections, we indeed have $g_{\alpha
  \beta \gamma} = 1$.
  
  We can now use these data to construct a line bundle $T \to Y$ by patching together
  line bundles $T_\alpha$ on $Y_\alpha = \pi^{-1}(U_\alpha)$. Each $T_\alpha$ is
  defined as follows: note that there are two maps from $Y_\alpha$ to $Y$: the
  inclusion $i \maps Y_\alpha \to Y$ and the composite of the projection followed by
  the section, $s_\alpha \pi \maps Y_\alpha \to Y$. By the universal property of the
  fiber product, this gives us a map $(s_\alpha \pi, i) \maps Y_\alpha \to
  Y^{[2]}$. So, we can pull $L$ back along this map, and define $T_\alpha = (s_\alpha
  \pi, i)^* L$. These glue together to give a line bundle $T$ on all of $Y$. To see
  this, note that using the sections $\sigma_{\alpha \beta}$ on each intersection
  $Y_{\alpha \beta}$, we can define a line bundle isomorphism:
  \[ \tilde{\sigma}_{\alpha \beta} \maps T_{\alpha} \to T_{\beta} . \]
  How? Because $\sigma_{\alpha \beta} \in \L_{\alpha \beta}$, and using the bundle
  gerbe multiplication, this is isomorphic to $(s_\alpha \pi, i)^* \L \otimes
  (s_\beta \pi, i)^* \L^* = \T_\alpha \otimes \T^*_\beta$. Finally, because we have
  normalized the sections $\sigma_{\alpha \beta}$ so that $g_{\alpha \beta \gamma} =
  1$, these isomorphisms satisfy:
  \[ \tilde{\sigma}_{\alpha \beta} \tilde{\sigma}_{\beta \gamma}
    \tilde{\sigma}_{\gamma \alpha} = 1 \]
  and we conclude that the $T_\alpha$ do indeed glue to give a line bundle $T \to
  Y$. By construction, $L \iso \delta T$ and this is compatible with the bundle gerbe
  multiplication, so $T$ defines a trivialization of $\G$.
\end{proof}

\subsection{With connection}
\label{sec:conn}

\begin{defn}
  A \define{connection} on a bundle gerbe $\G = (Y, L, \mu)$ consists of the
  following data:
  \begin{itemize}
  \item a connection $\nabla$ on the line bundle $L$, called the \define{line bundle connection};
  \item an even, complex-valued 2-form $B \in \Omega^2_{\C,0}(Y)$, called the
    \define{curving}; here $\Omega_\C^p$ denotes the complexification $\Omega^p
    \otimes \C$, and $\Omega^p_{\C,0}$ denotes its even part;
  \end{itemize}
  such that:
  \begin{itemize}
  \item the gerbe multiplication $\mu \maps \pi_3^* L \otimes \pi_1^* L \to \pi_2^*
    L$ is an isomorphism of line bundles with connection over $Y^{[3]}$, for the
    connection induced by $\nabla$;
  \item the line bundle connection and curving satisfy the \define{descent equation}
    on $Y^{[2]}$:
    \[ F_\nabla = \pi_1^* B - \pi_2^* B , \]
    where $F_\nabla$ is the curvature 2-form of $\nabla$.
  \end{itemize}
\end{defn}
\noindent
We write a connection as the pair $(\nabla, B)$ consisting of the line bundle
connection and the curving. We denote a bundle gerbe $\G$ with connection $(\nabla,
B)$ by the triple $(\G, \nabla, B)$, or simply by $\G$. While we call the pair
$(\nabla, B)$ a connection, some authors use the word ``connection'' to refer to the
line bundle connection $\nabla$ alone \cite{Murray, MurrayStevenson}. In the theory
of 2-bundles \cite{Bartels}, which generalizes the theory of bundle gerbes
\cite{NikolausWaldorf}, the pair $(\nabla, B)$ is called a \define{2-connection}
\cite{BaezHuerta, BaezSchreiber}.

The first example of a connection is on the trivial bundle gerbe, and we can produce
such a connection starting from any even 2-form on $M$. This is analogous to the fact
that an even 1-form defines a connection on the trivial line bundle.

\begin{ex}[Connections on the trivial bundle gerbe]
  \label{ex:trivconn}
  By Corollary \ref{cor:triv}, the trivial bundle gerbe $\I$ has the following simple
  form, up to equivalence:
  \[ \I = (M, \Oh_\C, \mu_{\rm can} ) \]
  where the submersion $M \to M$ is the identity, the line bundle over $M^{[2]} = M$
  is the trivial line bundle $\Oh_\C$, and the multiplication $\mu_{\rm can}$ is the
  usual multiplication on the complexified structure sheaf, $\Oh_\C$.

  Now fix an even, complex-valued 2-form $b \in \Omega^2_{\C,0}(M)$. Using $b$, we
  equip the trivial bundle gerbe with a connection as follows: take the line bundle
  connection to be the canonical flat connection on $\nabla^0$ on $\Oh_\C$, and take
  the curving to be the 2-form $b$. We denote the trivial bundle gerbe with this
  connection by $\I_b$, and call it \define{the trivial bundle gerbe with connection
  2-form $b$}. For $b = 0$, we continue to write $\I$ rather than $\I_0$.

  A trivial line bundle with connection, considered up to isomorphism, is the same as
  a 1-form. For bundle gerbes, however, the analogous fact is not true: $\I_b \simeq
  \I_{b'}$ does not imply that $b = b'$, but rather that $b - b'$ is a closed,
  integral 2-form, as we explain later.
\end{ex}

\noindent
All the usual operations on bundle gerbes can be easily modified to incorporate
connections:
\begin{itemize}
\item[ ] {\bf Duals.} Given a bundle gerbe $\G$ over $M$ with connection $(\nabla,
  B)$, its dual $\G^*$ has the connection $(\nabla^*, -B)$.
\item[ ] {\bf Tensor product.} Given two bundle gerbes $\G = (Y, L, \mu)$ and $\H =
  (Z, Q, \nu)$ with connections $(\nabla, B)$ and $(\nabla', B')$, respectively,
  recall their tensor product is
  \[ \G \otimes \H = (Y \times_M Z, L \otimes Q, \mu \otimes \nu) . \]
  This has the connection $(\nabla + \nabla', B + B')$. Here $B + B'$ denotes the
  result of pulling $B$ and $B'$ back to $Y \times_M Z$ and adding them, but we treat
  these pullbacks as implicit.
\item[ ] {\bf Pullback.} Given a bundle gerbe $\G$ with connection $(\nabla, B)$ over
  the supermanifold $N$, and a map of supermanifolds $f \maps M \to N$, the pullback
  $f^* \G$ has the connection $(f^* \nabla, f^* B)$.
\end{itemize}

\noindent
Much like bundle gerbes without connection, the collection of bundle gerbes with
connection on $M$ form a bicategory \cite{Stevenson, Waldorf}. For our purposes, it
is enough to know when two bundle gerbes with connection are equivalent in this
bicategory. For bundle gerbes without connection, $\G$ and $\H$ are equivalent when
$\G \otimes \H^*$ is trivializable, and the definition with connection is the
same. We need only specify how connections behave under trivialization.

A \define{trivialization} of a bundle gerbe $\G = (Y, L, \mu)$ with connection
$(\nabla, B)$ is a choice of the following data:
\begin{itemize}
\item a line bundle $T \to Y$ with connection;
\item an isomorphism $\tau \maps L \to \delta T$ of line bundles with connection
  over $Y^{[2]}$.
\end{itemize}
These data must satisfy:
\begin{itemize}
\item The isomorphism $\tau$ respects the bundle gerbe multiplication on $\G$ and $\delta T$;
\item The curvature $F_T$ of the connection on $T$ equals the curving: $F_{T} = B$.
\end{itemize}
We say that a bundle gerbe with connection is \define{trivializable} if a
trivialization exists.
\begin{defn}
  Two bundle gerbes $\G$ and $\H$ with connection over $M$ are \define{equivalent} if
  $\G \otimes \H^*$ is trivializable.
\end{defn}
\noindent
As before, we write $\G \simeq \H$ when $\G$ and $\H$ are equivalent. 

Let $\mathbb{G}^{\nabla}(M)$ denote the collection of all equivalence classes of
bundle gerbes on $M$. The tensor product operation equips $\mathbb{G}^{\nabla}(M)$
with an abelian group structure, with duals as inverses and $\I$ as the identity. In
contrast to the case without connection, $\mathbb{G}^{\nabla}(M)$ depends on the
supermanifold structure of $M$, not just on the topology of the body. Like their line
bundle cousins, bundle gerbes with connection are classified by `Deligne cohomology':
\[ \mathbb{G}^{\nabla}(M) \iso H^2(M,\Oh_\C^* \stackrel{d\log}{\longrightarrow}
  \Omega^1_{\C,0} \stackrel{d}{\longrightarrow} \Omega^2_{\C,0}) .
\]
Given a bundle gerbe $\G$, we denote the corresponding cohomology class by
$\Del(\G)$, and call it the `Deligne class of $\G$.' To describe its construction, we
must first define Deligne cohomology for supermanifolds. 

Recall that for $X$ a smooth manifold, the \define{$k$th smooth Deligne complex} is
the following complex of sheaves on $X$ (with $\underline{\C}^*_X$ in degree 0):
\[ \underline{\C}^*_X \stackrel{d\log}{\longrightarrow}
  \Omega_\C^1 \stackrel{d}{\longrightarrow} \Omega^2_\C \stackrel{d}{\longrightarrow} \cdots \stackrel{d}{\longrightarrow} \Omega^k_\C . \]
We then define \define{smooth Deligne cohomology} on $X$ as the hypercohomology of
this complex.

Concretely, we can compute Deligne cohomology using \Cech\ hypercohomology: fix a
good cover $\{U_\alpha\}$ of $X$, and take the \Cech\ complex for each sheaf in the
Deligne complex. In this way, we obtain a double complex of abelian groups, called
the \define{\Cech--Deligne complex}. Totalizing and taking cohomology yields Deligne
cohomology.

It is not difficult to generalize this definition to supermanifolds, with one minor
complication: we will need only the even part of the de Rham complex. So, on a
supermanifold $M$, define the \define{$k$th smooth Deligne complex $\DelCpx(k)$} to be the
following complex of sheaves (with $\Oh_\C^*$ in degree 0):
\[ \Oh^*_\C \stackrel{d\log}{\longrightarrow} \Omega^1_{\C,0}
  \stackrel{d}{\longrightarrow} \Omega^2_{\C,0} \stackrel{d}{\longrightarrow} \cdots
  \stackrel{d}{\longrightarrow} \Omega^k_{\C,0} . \]
As usual, $\Oh^*_\C$ is the subsheaf of even, invertible elements of the complexified
structure sheaf $\Oh_\C$, and $\Omega^p_{\C,0}$ is the sheaf of even, complexified
$p$-forms. We define the operation $d\log$ to mean
\[ d\log(f) = \frac{df}{f} , \]
for any $f \in \Oh^*_\C(U)$. Because sections of $\Oh^*_\C$ are even and invertible,
this definition is unambiguous.

The \define{smooth Deligne cohomology} of $M$ is the hypercohomology of the $k$th
Deligne complex for a given $k$. We will focus on the second cohomology group of the
second smooth Deligne complex, $H^2(M, \DelCpx(2))$, because this is the group
that classifies bundle gerbes with connection. For brevity, we call this group the
\define{Deligne cohomology} of $M$.

We now construct a class in the Deligne cohomology of $M$ from a bundle gerbe $\G =
(Y, L, \mu)$ with connection $(\nabla, B)$. We use \Cech\ hypercohomology to do
so. The choices involved are the same as in the previous section, but we recall them
here: choose a good open cover $\mathcal{U} = \{U_\alpha\}$ of $M$ such that the
submersion $\pi \maps Y \to M$ admits sections $s_\alpha \maps U_\alpha \to Y$.  On
each double intersection $U_{\alpha\beta} = U_\alpha \cap U_\beta$, the universal
property of the fiber product yields a map:
\[ (s_\alpha, s_\beta) \maps U_{\alpha \beta} \to Y^{[2]} . \]
As in the last section, in the map $s_\alpha \maps U_\alpha \to Y$ and the induced
map $(s_\alpha, s_\beta) \maps U_{\alpha \beta} \to Y^{[2]}$, we are treating open
subsets of the body $M_b$ as supermanifolds. We do this by restricting the structure
sheaf $\Oh_M$ to the given open set.

Now use the map $(s_\alpha, s_\beta)$ to pull back our line bundle $L$ to $U_{\alpha
\beta}$, yielding a line bundle $L_{\alpha \beta} = (s_\alpha, s_\beta)^* L$. Because
$U_{\alpha \beta}$ is contractible, $L_{\alpha \beta}$ is trivializable
\cite{Kostant}, so there is a section $\sigma_{\alpha \beta} \in \L_{\alpha
\beta}(U_{\alpha \beta})$ such that $\L_{\alpha \beta} = \Oh_\C \cdot
\sigma_{\alpha\beta}|_{U_{\alpha\beta}}$.

Having made these choices, we obtain our degree 2 class in Deligne cohomology. In
detail, we take the \Cech--Deligne double complex
\[ C^\bullet(\mathcal{U},\DelCpx(2)) \]
associated to the cover $\mathcal{U}$, and pass to the total complex. Degree 2 of the
total complex is the following direct sum:
\[ \prod_{\alpha,\beta,\gamma} \Oh^*_\C(U_{\alpha\beta\gamma}) \oplus \,
  \prod_{\alpha,\beta} \Omega^1_{\C,0}(U_{\alpha\beta}) \oplus \, \prod_{\alpha}
  \Omega^2_{\C,0}(U_\alpha) . \]
Inside this direct sum we find our representative $(g_{\alpha\beta\gamma},
A_{\alpha\beta}, B_\alpha)$. The first element $g_{\alpha\beta\gamma}$ is the same as
in the last section: on the triple intersection $U_{\alpha \beta \gamma} = U_\alpha
\cap U_\beta \cap U_\gamma$, the bundle gerbe multiplication induces an isomorphism
$\mu \maps L_{\alpha \beta} \otimes L_{\beta \gamma} \to L_{\alpha \gamma}$. We thus
have two sections of $L_{\alpha \gamma}$ over a triple intersection, namely
$\sigma_{\alpha \gamma}$ and $\mu(\sigma_{\alpha \beta} \otimes \sigma_{\beta
\gamma})$, both of which are trivializing. Thus we must have:
\[ \mu(\sigma_{\alpha \beta} \otimes \sigma_{\beta \gamma}) = g_{\alpha \beta \gamma}
  \sigma_{\alpha \gamma} , \]
for some even, invertible element of the complexified structure sheaf, $g_{\alpha
\beta \gamma} \in \Oh_\C^*(U_{\alpha \beta \gamma})$.

The even 1-form $A_{\alpha\beta} \in \Omega^1_{\C,0}(U_{\alpha\beta})$ on each double
intersection $U_{\alpha\beta}$ comes from our connection. Specifically, we pull
$\nabla$ back to $L_{\alpha\beta}$, and define $A_{\alpha \beta}$ to be the unique
even 1-form such that the following equation holds:
\[ \nabla(\sigma_{\alpha\beta}) = A_{\alpha\beta} \otimes \sigma_{\alpha\beta} . \]

Finally, the even 2-form $B_\alpha \in \Omega^2_{\C,0}(U_\alpha)$ on each open $U_\alpha$
is simply the pullback of the curving $B \in \Omega^2_{\C,0}(Y)$ along $s_\alpha$:
\[ B_\alpha = s_\alpha^* B . \]

The triple $(g_{\alpha\beta\gamma}, A_{\alpha\beta}, B_\alpha)$ is a degree 2 element
in the \Cech--Deligne total complex. With some work, we can use the definition of
bundle gerbe with connection to show that it is a 2-cocycle. With some more work, we
can show that the cohomology class of this 2-cocycle is independent of the choices we
have made. Thus we obtain a class $\Del(\G) \in H^2(M, \DelCpx(2))$ in Deligne
cohomology, the \define{Deligne class of the bundle gerbe with connection $\G$}.

The Deligne class just constructed is a complete invariant of a bundle gerbe with
connection, considered up to equivalence. We prove this in
Theorem~\ref{thm:deligne}. Its proof follows the proof of the analogous result over
smooth manifolds by Murray \cite{Murray} and Murray--Stevenson
\cite{MurrayStevenson}, adapted to the language of structure sheaves.

\begin{thm} \label{thm:deligne} Let $M$ be a supermanifold, $\mathbb{G}^\nabla(M)$
  the abelian group of equivalence classes of bundle gerbes with connection over $M$,
  and let $\Del \maps \mathbb{G}^\nabla(M) \to H^2(M,\DelCpx(2))$ be the map sending
  the equivalence class of a bundle gerbe with connection on $M$ to the corresponding
  Deligne cohomology class. Then:
  \begin{enumerate}
  \item $\Del \maps \mathbb{G}^\nabla(M) \to H^2(M, \DelCpx(2))$ is an isomorphism of
    abelian groups, i.e.,
    \begin{itemize}
    \item $\Del$ is onto;
    \item $\Del(\G) = 0$ if and only if $\G$ is trivial;
    \item $\Del(\G \otimes \H) = \Del(\G) + \Del(\H)$;
    \item $\Del(\G^*) = -\Del(\G)$;
    \end{itemize}
    where $\G$ and $\H$ are bundle gerbes with connection on $M$.
  \item If $f \maps M \to N$ is a map of supermanifolds, $\Del$ is natural with respect
    to pullback by $f$:
    \[ \Del(f^* \G) = f^* \Del(\G), \]
    for any bundle gerbe $\G$ with connection on $N$.
  \end{enumerate}
\end{thm}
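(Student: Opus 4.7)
The plan is to mirror the proof of Theorem~\ref{thm:cech} step by step, augmenting each stage to account for the connection data $(\nabla, B)$ and the additional Čech--Deligne components $(A_{\alpha\beta}, B_\alpha)$. Naturality under pullback is formal: given $f \maps M \to N$, refine a good cover $\{U_\alpha\}$ of $N$ to one on $M$ along which the chosen sections $s_\alpha$ pull back, and observe that the resulting Čech--Deligne cocycle for $f^*\G$ is the componentwise pullback of the one for $\G$. That $\Del$ is a group homomorphism is a direct calculation on representatives: tensoring bundle gerbes with connection corresponds to componentwise multiplication of $g_{\alpha\beta\gamma}$ and addition of $A_{\alpha\beta}$ and $B_\alpha$, i.e., componentwise addition in the total complex. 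The identities $\Del(\I) = 0$ and $\Del(\G^*) = -\Del(\G)$ follow immediately.

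For surjectivity, mimic the construction from Theorem~\ref{thm:cech}: given a Čech--Deligne 2-cocycle $(g_{\alpha\beta\gamma}, A_{\alpha\beta}, B_\alpha)$ over a good cover $\{U_\alpha\}$, take the submersion $Y = \coprod_\alpha U_\alpha \to M$ with fiber square $Y^{[2]} = \coprod_{\alpha,\beta} U_{\alpha\beta}$, place the trivial line bundle on $Y^{[2]}$, define the gerbe multiplication on triple overlaps by $g_{\alpha\beta\gamma}$, set the line bundle connection to be $A_{\alpha\beta}$ on $U_{\alpha\beta}$, and define the curving on $Y$ to be $B_\alpha$ on each $U_\alpha$. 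The three Čech--Deligne cocycle identities then translate directly into associativity of $\mu$, compatibility of $\mu$ with $\nabla$, and the descent equation $F_\nabla = \pi_1^* B - \pi_2^* B$, so we obtain a bundle gerbe with connection whose Deligne class is the prescribed one.

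For triviality of the kernel, suppose $\Del(\G) = 0$, witnessed by a Čech--Deligne 1-cochain $(h_{\alpha\beta}, a_\alpha)$ whose coboundary equals $(g_{\alpha\beta\gamma}, A_{\alpha\beta}, B_\alpha)$. As in Theorem~\ref{thm:cech}, replace each trivializing section $\sigma_{\alpha\beta}$ by $h_{\alpha\beta}^{-1}\sigma_{\alpha\beta}$ to normalize $g_{\alpha\beta\gamma} = 1$; this simultaneously simplifies $A_{\alpha\beta}$ to $a_\beta - a_\alpha$, while $B_\alpha = da_\alpha$ is unchanged. Build the trivializing line bundle $T \to Y$ by gluing $T_\alpha = (s_\alpha\pi, i)^* L$ along the renormalized isomorphisms $\tilde{\sigma}_{\alpha\beta}$, exactly as in the proof without connection. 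Equip each $T_\alpha$ with the pullback of $\nabla$ modified by $+ \pi^* a_\alpha$; pulling the descent equation back along $(s_\alpha \pi, i)$ and invoking $B_\alpha = da_\alpha$ gives curvature equal to $B|_{Y_\alpha}$ on each piece, hence $F_T = B$ globally, while the identity $A_{\alpha\beta} = a_\beta - a_\alpha$ ensures the gluing isomorphisms $\tilde{\sigma}_{\alpha\beta}$ are isometries of line bundles with connection.

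The main obstacle is the last step: producing the connection on the trivializing bundle $T$ with the correct curvature and verifying that the renormalized gluing data respects this connection. The sign bookkeeping in the pullback along $(s_\alpha\pi, i)$ is delicate, and one must check that the chosen coboundary $(h_{\alpha\beta}, a_\alpha)$ simultaneously kills $g_{\alpha\beta\gamma}$, adjusts $A_{\alpha\beta}$, and makes the modified local connections compatible---a verification that amounts to reading the Čech--Deligne coboundary formula line-by-line against the axioms of a trivialization with connection. Once these compatibilities are checked, Lemma~\ref{lem:murray} is implicitly in play ensuring the local curvature descriptions agree on overlaps to produce the global equality $F_T = B$ on $Y$.
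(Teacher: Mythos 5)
Your proposal is correct, and on the one point where the paper's proof does real work---injectivity---you take a genuinely different route. The paper first forgets the connection, builds $T$ and $\tau \maps L \to \delta T$ as in Theorem~\ref{thm:cech}, then chooses an \emph{arbitrary} connection on $T$ and corrects it in two abstract steps via Lemma~\ref{lem:murray}: first a $\delta$-closed 1-form is trivialized to arrange $\nabla = \delta\nabla_T$, then the discrepancy $F_T - B$ is shown to be $\pi^* b$ for a 2-form $b$ on $M$, and finally an auxiliary line bundle $(Q,\nabla_Q)$ on $M$ with curvature $b$ is manufactured from the coboundary data so that replacing $T$ by $T \otimes \pi^* Q^*$ fixes the curvature. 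You instead build the connection on $T$ directly at the cocycle level, equipping $T_\alpha = (s_\alpha\pi, i)^*L$ with $(s_\alpha\pi,i)^*\nabla + \pi^* a_\alpha$; pulling back the descent equation gives local curvature $\bigl(B - \pi^* B_\alpha\bigr) + \pi^* da_\alpha = B$ on $Y_\alpha$, and the renormalized $A_{\alpha\beta}$ being a \v{C}ech difference of the $a_\alpha$ is exactly the condition for the gluing maps $\tilde\sigma_{\alpha\beta}$ to preserve connections (and $\delta(\pi^* a_\alpha)=0$ keeps $\tau$ connection-preserving). This is more elementary and avoids both Lemma~\ref{lem:murray} and the auxiliary bundle $Q$ in this step, at the price of the sign bookkeeping you flag: note that the descent equation forces $dA_{\alpha\beta} = B_\beta - B_\alpha$, so the coboundary convention must be chosen compatibly (the paper's displayed formula $A_{\alpha\beta} = z_\alpha - z_\beta + f_{\alpha\beta}^{-1}df_{\alpha\beta}$ has the opposite orientation), and the sign of your modification $+\pi^* a_\alpha$ must match the sign in $A_{\alpha\beta} = a_\beta - a_\alpha$, since flipping only one of them breaks either the curvature computation or the gluing. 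One small correction: your final appeal to Lemma~\ref{lem:murray} is unnecessary---once the local connections glue to a global $\nabla_T$, its curvature is a global 2-form that agrees with $B$ on each $Y_\alpha$, so $F_T = B$ outright. Your treatment of surjectivity, the homomorphism property, and naturality matches the paper's (which leaves these as ``similar to Theorem~\ref{thm:cech}''), and your explicit translation of the three cocycle identities into associativity, compatibility of $\mu$ with $\nabla$, and the descent equation is a welcome amplification.
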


\begin{proof}
  The proof of this theorem is similar to the proof of Theorem~\ref{thm:cech}, but
  differs when it comes to showing that $\Del$ is injective. We thus focus on this
  part. Because $\Del$ is a group homomorphism, it suffices to check that it has
  trivial kernel. In other words, that $\Del(\G) = 0$ implies $\G$ has a
  trivialization.

  So let $\G = (Y, L, \mu)$ be a bundle gerbe on $M$ with connection $(\nabla,
  B)$. Let $\Del(\G) = [g_{\alpha \beta \gamma}, A_{\alpha \beta}, B_\alpha]$ be the
  Deligne class constructed as above using a good cover $\{U_\alpha\}$, sections
  $s_\alpha \maps U_\alpha \to Y$, and trivializing sections $\sigma_{\alpha \beta}
  \in \L_{\alpha\beta}(U_{\alpha\beta})$. The equation $\Del(\G) = 0$ means our
  representative 2-cocycle $(g_{\alpha \beta \gamma}, A_{\alpha \beta}, B_\alpha)$ is
  a coboundary, which in turn means that
  \begin{eqnarray*}
    g_{\alpha \beta \gamma} & = & f_{\alpha\beta} f_{\beta\gamma}f_{\gamma\alpha}, \\
    A_{\alpha \beta} & = & z_\alpha - z_\beta + f_{\alpha\beta}^{-1} df_{\alpha\beta}^{\ns}, \\
    B_\alpha & = & dz_\alpha ,
  \end{eqnarray*}
  for some collection of even, invertible elements of the complexified structure
  sheaf, $f_{\alpha\beta} \in \Oh_\C^*(U_{\alpha \beta})$, and some collection of
  even 1-forms, $z_\alpha \in \Omega^1_{\C,0}(U_\alpha)$.

  Forgetting the connection $(\nabla, B)$ and the associated cocycle data
  $(A_{\alpha\beta}, B_\alpha)$, we are in precisely the situation of
  Theorem~\ref{thm:cech}. As in the proof of that theorem, we can construct a line
  bundle $T \to Y$ and a line bundle isomorphism $\tau \maps L \to \delta T$
  compatible with multiplication. To finish constructing a trivialization of $(\G,
  \nabla, B)$, we need to:
  \begin{itemize}
  \item Choose a connection $\nabla_T$ on $T$ such that, with the induced connection
    on $\delta T$, the map $\tau$ becomes an isomorphism of line bundles with
    connection;
  \item Check that $F_T = B$, for $F_T$ the curvature of $\nabla_T$, and $B$ the
    curving.
  \end{itemize}

  Finding the connection is straightforward: to begin, choose any connection
  $\nabla_T$ on $T$. Let $\delta \nabla_T$ denote the induced connection on $\delta
  T$. Using the isomorphism to identify $L$ and $\delta T$, the two line bundle
  connections differ by an even 1-form:
  \[ \nabla = \delta \nabla_T + a , \]
  for some $a \in \ECDR^1(Y^{[2]})$. The fact that $\nabla$ is compatible with
  multiplication implies $\delta a = 0$, and an application of Lemma~\ref{lem:murray}
  tells us that $a = \delta a'$ for some even 1-form $a' \in \ECDR^1(Y)$. Redefining
  $\nabla_T$ to be $\nabla_T - a'$, we have $\nabla = \delta \nabla_T$, as desired.

  Alas, with this choice $(T, \nabla_T)$ of line bundle and connection, we cannot
  guarantee that the curvature equals the curving, $F_T = B$. Instead, we can check
  that $F_T - B = \pi^* b$, for some 2-form $b$ on $M$, as follows: because
  $(\nabla, B)$ is a connection, it satisfies the descent equation, $F_\nabla =
  \delta B$. Moreover, because $\nabla = \delta \nabla_T$, we also have $F_\nabla =
  \delta F_T$. Combining these two facts, we see that $F_T - B$ is closed under
  $\delta$, so another application of Lemma~\ref{lem:murray} gives us the 2-form $b$.

  To finish constructing our trivialization, we note there is a line bundle with
  connection $(Q, \nabla_Q)$ on $M$ whose curvature is the 2-form $b$: $F_Q =
  b$. To see this, pull our line bundle $T$ back to the open set $U_\alpha$,
  $T_\alpha = s_\alpha^* T$, and choose trivializing sections $\sigma_\alpha \in
  \T_\alpha(U_\alpha)$. The connection $\nabla_T$ induces a connection
  $\nabla_{T_\alpha}$, and we let $a_\alpha \in \Omega^1_\C(U_\alpha)$ denote the
  corresponding connection 1-form:
  \[ \nabla_{T_\alpha} \sigma_\alpha = a_\alpha \otimes \sigma_\alpha . \]
  Finally, redefine the trivializing sections $\sigma_{\alpha\beta}$ of
  $L_{\alpha\beta}$ to be given by the difference $\sigma_{\alpha \beta} =
  \tfrac{\sigma_\alpha}{\sigma_\beta}$. With these choices, we have:
    \begin{eqnarray*}
    g_{\alpha \beta \gamma} & = & 1 , \\
    A_{\alpha \beta} & = & a_\alpha - a_\beta .
  \end{eqnarray*}
  Combining this with the fact that this 2-cocycle is a coboundary, we see:
  \begin{eqnarray*}
    1 & = & f_{\alpha\beta} f_{\beta\gamma}f_{\gamma\alpha}, \\
    a_\alpha - a_\beta & = & z_\alpha - z_\beta + f_{\alpha\beta}^{-1} df_{\alpha\beta}^{\ns}, \\
    B_\alpha & = & dz_\alpha ,
  \end{eqnarray*}
  Thus, there is a line bundle $(Q, \nabla_Q)$ on $M$ with descent data
  $(f_{\alpha\beta}, a_\alpha - z_\alpha)$. It is a quick exercise to check that $F_Q
  = b$ using the descent data.

  Now, with the line bundle $(Q, \nabla_Q)$ in hand, note that $T \otimes \pi^* Q^*$
  with its induced connection $\nabla_{T \otimes \pi^* Q^*}$ has curvature equal to
  the curving $B$. Moreover, because $\delta \pi^* = 0$, note that $\delta(T \otimes
  \pi^* Q^*) \iso \delta T$ and this isomorphism takes $\delta \nabla_{T \otimes
  \pi^* Q^*}$ to $\delta \nabla_T$. Thus we can redefine $T$ to be $T \otimes \pi^*
  Q^*$, and this yields our trivialization of $(\G, \nabla, B)$.
\end{proof}

We turn to the issue of when $\I_b \simeq \I_{b'}$. Let us write
$\mathbb{I}^\nabla(M)$ for equivalence classes of trivial bundle gerbes with
connection; more precisely, this is the set of all equivalence classes of bundle
gerbes with connection that have vanishing Dixmier--Douady class:
\[ \mathbb{I}^\nabla(M) = \left\{ [\G] \in \mathbb{G}^\nabla(M) \, : \, \DD(\G) = 0
  \right\} . \]
This is a group under tensor product, and we will now compute it. To facilitate this,
we need a definition: we say that a closed, complex-valued $p$-form $\alpha \in
\Omega^p_\C(M)$ is \define{integral} if its de Rham class $[\alpha]$ lies in the
image of the map $H^p(M_b, 2\pi i \Z) \to H^p(M_b, \C)$ from integral cohomology. 

\begin{prop}\label{prop:trivconn}
  The abelian group $\mathbb{I}^\nabla(M)$ of trivial bundle gerbes with connection
  is isomorphic to the group of even 2-forms modulo the group of even, closed,
  integral 2-forms, which we denote by $\Omega^2_\Z(M)$:
  \[ \mathbb{I}^\nabla(M) \iso \ECDR^2(M)/\Omega^2_\Z(M) . \]
\end{prop}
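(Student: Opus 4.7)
The plan is to define the evaluation map
\[ \phi \maps \ECDR^2(M) \to \mathbb{I}^\nabla(M), \qquad b \mapsto [\I_b], \]
and prove that $\phi$ is a surjective group homomorphism with kernel $\Omega^2_\Z(M)$. The map is well-defined since $\DD(\I_b)=0$, and it is a group homomorphism because the tensor product operation directly shows $\I_b \otimes \I_{b'} \simeq \I_{b+b'}$: both sides are represented by the identity submersion with trivial line bundle, flat line bundle connection, and curving $b+b'$, after invoking Proposition \ref{prop:triv} to identify the submersions up to equivalence.

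For surjectivity, I would take $[\G] \in \mathbb{I}^\nabla(M)$ with $\G=(Y,L,\mu,\nabla,B)$. Since $\DD(\G)=0$, Theorem \ref{thm:cech} supplies a line bundle $T\to Y$ with an isomorphism $L \iso \delta T$ compatible with $\mu$. This is precisely the setup appearing in the injectivity step of the proof of Theorem \ref{thm:deligne}: choose any connection on $T$, then modify it by a 1-form on $Y$ obtained from Lemma \ref{lem:murray} so that $\delta \nabla_T = \nabla$ under the identification. The descent equation $F_\nabla = \delta B$ combined with $F_\nabla = \delta F_T$ gives $\delta(F_T - B)=0$, so Lemma \ref{lem:murray} produces a 2-form $b \in \ECDR^2(M)$ with $F_T - B = -\pi^* b$. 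Then $F_T$ equals the curving $B - \pi^* b$ of $\G \otimes \I_b^*$, so $(T, \nabla_T)$ trivializes $\G \otimes \I_b^*$, giving $\G \simeq \I_b$ and hence $[\G]=\phi(b)$.

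For the kernel, observe that $\phi(b)=0$ means $\I_b$ is itself trivializable. Writing $\I_b = (M, \Oh_\C, \mu_{\rm can}, \nabla^0, b)$ with the identity submersion (allowable by Corollary \ref{cor:triv}), a trivialization is a line bundle with connection $(T,\nabla_T)$ on $M$ with $\Oh_\C \iso \delta T$ compatibly and $F_T = b$; the isomorphism condition is automatic on $M^{[2]}=M$. Thus $\ker \phi$ consists of exactly those even 2-forms that arise as the curvature of a line bundle with connection on $M$. The forward direction of the identification $\ker\phi = \Omega^2_\Z(M)$ is standard: curvatures are closed by Bianchi, and their de Rham classes are the image of the Chern class, which lies in $H^2(M_b,2\pi i \Z)$. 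For the converse, given a closed, integral $b$, the class $[b/(2\pi i)] \in H^2(M_b,\Z)$ corresponds via the exponential sequence of the excerpt to a line bundle $T$ on $M$; equipping $T$ with any connection gives curvature $F$ with $[F]=[b]$, and $F-b=d\alpha$ for some even 1-form $\alpha$, so subtracting $\alpha$ from the connection produces curvature exactly $b$.

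The main obstacle is this last converse step, which is the supergeometric version of the Weil integrality theorem. It rests on two ingredients: the classification of line bundles on $M$ via $\check{H}^1(M_b,\Oh_\C^*) \iso H^2(M_b,\Z)$, coming from Kostant's exponential sequence as already used in the proof of Theorem \ref{thm:cech}, and the de Rham theorem identifying the cohomology of $\ECDR^\bullet(M)$ with $H^\bullet(M_b,\C)$, allowing us to recognise $[F]=[b]$ and conclude that $F-b$ is exact. Both inputs are implicit in the Kostant framework referenced in the excerpt, and together they pin down the kernel as $\Omega^2_\Z(M)$, yielding the desired isomorphism.
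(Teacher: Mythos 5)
Your proposal is correct and takes essentially the same route as the paper: the same homomorphism $b \mapsto \I_b$, the same surjectivity argument reusing the trivialization construction from Theorems \ref{thm:cech} and \ref{thm:deligne} together with Lemma \ref{lem:murray} to get $B = F_T + \pi^* b$, and the same identification of the kernel with curvatures of line bundles with connection on $M$. The only difference is presentational: where the paper cites Kostant for the fact that every even, closed, integral 2-form is the curvature of a line bundle with connection, you sketch the proof of that fact via the exponential sequence and the quasi-isomorphism $i^*$, which is exactly the content of the cited result.
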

\begin{proof}
  Consider the map
  \[ \begin{array}{rcl}
       \ECDR^2(M) & \to & \mathbb{I}^\nabla(M) \\
       b & \mapsto & \I_b , 
     \end{array}
   \]  
   where $\I_b = (M, \Oh_\C, \mu_{\rm can})$ is the trivial bundle gerbe with curving
   $b \in \ECDR^2(M)$, as constructed in Example \ref{ex:trivconn}. Because the
   curvings add under tensor product, this map is a homomorphism. We wish to show
   that it is onto with kernel $\Omega^2_\Z(M)$.

   To show that it is onto, let $\G = (Y, L, \mu)$ be a bundle gerbe with connection
   $(\nabla, B)$ and vanishing Dixmier--Douady class. By Theorem~\ref{thm:cech},
   there is a line bundle $T \to Y$ and an isomorphism of line bundles $L \iso \delta
   T$ respecting bundle gerbe multiplication. We henceforth identify the line bundles
   $L$ and $\delta T$ using this isomorphism. By the same argument as in the proof of
   Theorem~\ref{thm:deligne}, we can equip $T$ with a connection $\nabla_T$ such that
   $\nabla = \delta \nabla_T$. Also by the same argument, there is a 2-form $b$ on
   the base such that:
   \[ B = F_T + \pi^* b . \]
   Thus $\G \simeq \I_b$.

   Finally. to show the kernel is $\Omega^2_\Z(M)$, we must check that $\I_b$ is
   trivializable if and only if $b$ is closed and integral. On the one hand, if $b$
   is closed and integral, then there is a line bundle $T \to M$ with connection and
   curvature $b$ \cite{Kostant}. This provides a trivialization. On the other hand,
   if there is a trivialization, $T \to M$, then $b = F_T$, so $b$ is closed and
   integral.
\end{proof}

\subsection{Curvature}
\label{sec:curvature}

To any connection on a bundle gerbe, we can associate a 3-form, called the
`curvature'. The curvature 3-form is closed, and we shall see it is a de Rham
representative of the Dixmier--Douady class.

\begin{defn}
  Let $\G = (Y, L, \mu)$ be a bundle gerbe with connection $(\nabla,B)$ over the
  supermanifold $M$. We define the \define{curvature $\curv(\G)$ of $\G$} as the
  unique complex-valued 3-form $\curv(\G) \in \Omega^3_\C(M)$ whose pullback to $Y$
  is the exterior derivative of the curving:
  \[ \pi^* \curv(\G) = dB . \]
\end{defn}

\noindent
The existence and uniqueness of this 3-form is a consequence of the descent equation,
\[ F_\nabla = \pi^*_1 B - \pi^*_2 B . \]
Indeed, recall that in the notation of Lemma \ref{lem:murray}, we write $\delta \maps
\Omega^2_\C(Y) \to \Omega^2_\C(Y^{[2]})$ for the map taking $B$ to $\pi^*_1 B -
\pi^*_2 B$. With this notation, the descent equation reads $F_\nabla = \delta
B$. Because the exterior derivative commutes with pullback, we quickly compute:
\[ \delta dB = d \delta B = d F_\nabla = 0 . \]
So, the 3-form $dB$ is $\delta$-closed, and Lemma \ref{lem:murray} guarantees a
unique 3-form $H$ on $M$ such that $dB = \pi^* H$. This 3-form $H$ is the curvature.

It is immediate from the definition that the curvature is closed, so it represents a
class in de Rham cohomology. The central fact about the curvature is that this class
is the same as the Dixmier--Douady class, up to a pesky sign:
\[ \DD(\G) = -[\curv(\G)] . \]
In order to make sense of this equation, we need to say how the Dixmier--Douady class
sits in de Rham cohomology. Recall the Dixmier--Douady class was constructed as a
degree-2 class in \Cech\ cohomology, $\DD(\G) \in \check{H}^2(M_b, \Oh_\C^*)$. We
then used a short exact sequence of sheaves:
\[ 0 \longrightarrow \Z_{M_b} \stackrel{2\pi i}{\longrightarrow} (\Oh_\C)_0
  \stackrel{\exp}{\longrightarrow} \Oh^*_\C \longrightarrow 0 , \]
to construct an isomorphism $\check{H}^2(M_b, \Oh^*_\C) \iso H^3(M_b, 2 \pi i
\Z)$. Here, the $2 \pi i$ arises naturally from its appearance in the short exact
sequence of sheaves. Of course, as abelian groups, $H^3(M_b, 2 \pi i \Z) \iso
H^3(M_b, \Z)$, but it is useful to keep the factor of $2 \pi i$ when comparing $\DD(\G)$
with the curvature.

Finally, because of the inclusion $2 \pi i \Z \inclusion \C$, we get a map $H^3(M_b,
2\pi i \Z) \to H^3(M_b, \C)$ from integral to complex coefficients. Abusing notation,
we continue to write $\DD(\G) \in H^3(M_b,\C)$ for the image of this map.

\begin{prop}
  \label{prop:rep}
  As cohomology classes in $H^3(M_b, \C)$, we have $\DD(\G) = -[\curv(\G)]$.
\end{prop}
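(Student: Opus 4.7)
The plan is to thread $[\curv(\G)]$ through the \Cech--de Rham double complex using precisely the local data from which $\DD(\G)$ was constructed, and then to identify the resulting \Cech{} 3-cocycle, up to sign, with the one produced by applying the connecting homomorphism of the exponential sequence to $g_{\alpha\beta\gamma}$. I reuse the setup of the proof of Theorem~\ref{thm:deligne}: a good cover $\{U_\alpha\}$ of $M_b$, sections $s_\alpha \maps U_\alpha \to Y$ of the submersion $\pi$, trivializing sections $\sigma_{\alpha\beta}$ of $L_{\alpha\beta} = (s_\alpha, s_\beta)^* L$, and the resulting Deligne data $(g_{\alpha\beta\gamma}, A_{\alpha\beta}, B_\alpha = s_\alpha^* B)$.

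The first task is to establish three local identities linking $H = \curv(\G)$ to this data: (i) $H|_{U_\alpha} = dB_\alpha$, which follows from $\pi^* H = dB$ since $\pi s_\alpha = 1$ and $d$ commutes with pullback; (ii) $dA_{\alpha\beta} = B_\beta - B_\alpha$ on $U_{\alpha\beta}$, obtained by pulling the descent equation $F_\nabla = \pi_1^* B - \pi_2^* B$ back along $(s_\alpha,s_\beta) \maps U_{\alpha\beta} \to Y^{[2]}$; and (iii) $A_{\alpha\beta} + A_{\beta\gamma} - A_{\alpha\gamma} = d\log g_{\alpha\beta\gamma}$ on $U_{\alpha\beta\gamma}$, obtained by applying $\nabla$ to $\mu(\sigma_{\alpha\beta} \otimes \sigma_{\beta\gamma}) = g_{\alpha\beta\gamma}\sigma_{\alpha\gamma}$ and using that $\mu$ preserves the line bundle connection. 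Read as equations in the \Cech--de Rham double complex $C^\bullet(\mathcal{U}, \Omega^\bullet_\C)$, these form the standard zigzag that moves a class from pure de Rham bidegree $(0,3)$ down to pure \Cech{} bidegree $(3,0)$. Threading $[H]$ along this zigzag, one arrives at the \Cech{} 3-cocycle
\[ (\delta \log g)_{\alpha\beta\gamma\delta} = \log g_{\beta\gamma\delta} - \log g_{\alpha\gamma\delta} + \log g_{\alpha\beta\delta} - \log g_{\alpha\beta\gamma}. \]
Since $g$ is a \Cech{} 2-cocycle in $\Oh^*_\C$, applying $\exp$ kills $\delta \log g$, so this cocycle takes values in $2\pi i \Z$, and by the very definition of the connecting homomorphism of $0 \to 2\pi i \Z \to (\Oh_\C)_0 \stackrel{\exp}{\longrightarrow} \Oh^*_\C \to 0$ it represents $\DD(\G)$, viewed in $H^3(M_b, \C)$ via the inclusion $2\pi i \Z \inclusion \C$.

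The main obstacle is pure sign bookkeeping. With the total differential $D = d + (-1)^q \delta$ on $C^p(\mathcal{U}, \Omega^q_\C)$, each of the three zigzag steps contributes a sign, as does the convention for the connecting homomorphism; I expect the net effect to produce $[\curv(\G)] = -\DD(\G)$ rather than $+\DD(\G)$, which accounts for the minus sign in the statement. This is the one piece of the argument that must be carried out carefully rather than invoked as routine.
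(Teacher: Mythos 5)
Your argument is essentially the paper's: both proofs thread the curvature through the \Cech--de Rham double complex using exactly the local identities you list ($H|_{U_\alpha}=dB_\alpha$, the pulled-back descent equation relating $dA_{\alpha\beta}$ to $B_\alpha, B_\beta$, and $\delta A = d\log g$ from compatibility of $\mu$ with $\nabla$), and then identify the resulting $2\pi i\Z$-valued \Cech{} 3-cocycle with $\DD(\G)$ via the connecting homomorphism of the exponential sequence. The one step you defer---the sign bookkeeping---is precisely what the paper's proof records in the single identity $D(h_{\alpha\beta\gamma},A_{\alpha\beta},B_\alpha)=(k_{\alpha\beta\gamma\delta},0,0,0,H_\alpha)$ with total differential $D=\delta+(-1)^p d$ and $h$ a chosen logarithm of $g$ (available by Kostant's surjectivity of $\exp$ on a good cover), so to finish you should carry out that computation explicitly rather than only expecting the minus sign to emerge.
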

\begin{proof}
  The core of the proof is a standard double complex argument such as one might find
  in Bott and Tu \cite{BottTu}, but we give it for completeness, following the proof
  in Waldorf's thesis \cite[Theorem 1.5.3]{WaldorfThesis}.

  We want to compare the Dixmier--Douady class $\DD(\G)$, constructed using \Cech\
  cohomology, with the de Rham class of the curvature $\curv(\G)$, and the natural
  way to do this is to work in the \Cech--de Rham double complex:
  \[ C^\bullet(\U, \CDR^\bullet) , \]
  for some good cover $\U = \{ U_\alpha \}$ of our supermanifold $M$. In
  supergeometry, we have both partitions of unity and the Poincar\'e lemma, so the
  usual arguments show that this complex has exact rows and columns. We take the
  total differential to be $D = \delta + (-1)^p d$, where $\delta$ is the \Cech\
  differential, $d$ is the exterior derivative, and $p$ is the \Cech\ degree.

  The result follows if we show that representatives of $\DD(\G)$ and $-\curv(\G)$ are
  cohomologous in the \Cech--de Rham complex, and we can do this starting with the
  Deligne class:
  \[ \Del(\G) = [g_{\alpha\beta\gamma}, A_{\alpha\beta}, B_{\alpha} ] . \]
  Because the cover is good, the exponential map $\exp \maps (\Oh_\C)_0(U_{\alpha
  \beta \gamma}) \to \Oh^*_\C(U_{\alpha \beta \gamma})$ is surjective \cite{Kostant},
  so we can choose a logarithm $h_{\alpha\beta\gamma} \in
  (\Oh_\C)_0(U_{\alpha\beta\gamma})$ for each $g_{\alpha\beta\gamma} \in
  \Oh^*_\C(U_{\alpha\beta\gamma})$.

  Having made this choice, the usual construction of the connecting homomorphism in
  \Cech\ cohomology tells us that $k_{\alpha\beta\gamma\delta} = (\delta
  h)_{\alpha\beta\gamma\delta}$ is a \Cech\ representative of the Dixmier--Douady
  class $\DD(\G)$, or more precisely of its image in $H^3(M_b, \C)$. Similarly,
  restricting the curvature $H = \curv(\G)$ to each open gives a family of 3-forms
  $H_\alpha = H|_{U_\alpha}$ that represents the de Rham class $[H]$ in the \Cech--de
  Rham complex. Now, a calculation shows:
  \[ D(h_{\alpha \beta \gamma}, A_{\alpha \beta}, B_\alpha) = (k_{\alpha \beta \gamma
    \delta}, 0, 0, 0, H_\alpha) . \]
  The right-hand side represents $DD(\G) + [H]$, and the equation says this is
  cohomologous to zero. This completes the proof.
  \end{proof}

\noindent
Because it lies in the image of $H^3(M_b, 2\pi i \Z) \to H^3(M_b, \C)$, the curvature
3-form is quite special: its de Rham class lies in a distinguished lattice inside the
larger vector space. In general, we say that a closed, complex-valued $p$-form
$\alpha \in \Omega^p_\C(M)$ is \define{integral} if its de Rham class $[\alpha]$ lies
in the image of the map $H^p(M_b, 2\pi i \Z) \to H^p(M_b, \C)$ from integral
cohomology.

Clearly, the curvature 3-form is integral. In fact, every even, closed, integral
3-form is the curvature 3-form of some bundle gerbe:

\begin{thm}
  \label{thm:integral}
  If $H \in \Omega^3_\C(M)$ is an even, closed, integral, complex-valued 3-form on
  the supermanifold $M$, then there is a bundle gerbe $\G$ with connection on $M$
  such that the curvature of $\G$ is $H$.
\end{thm}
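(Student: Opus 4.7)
The plan is to realize $H$ as the curvature in three stages: first, build a bundle gerbe $\G_0$ with the correct Dixmier--Douady class; second, endow it with some connection; and third, absorb the discrepancy between the resulting curvature and $H$ by tensoring with a trivial bundle gerbe $\I_\beta$.

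Since $H$ is integral, pick a lift $\xi \in H^3(M_b, 2\pi i \Z)$ of $[H] \in H^3(M_b, \C)$. Using the isomorphism $\check{H}^2(M_b, \Oh^*_\C) \iso H^3(M_b, 2\pi i \Z)$ together with the surjectivity of $\DD$ from Theorem~\ref{thm:cech}, I would choose a bundle gerbe $\G_0 = (Y, L, \mu)$ on $M$ with $\DD(\G_0) = -\xi$. To equip $\G_0$ with a connection, I would start from any line bundle connection $\nabla'$ on $L$, obtained via a partition of unity on $Y^{[2]}$. The failure of $\nabla'$ to be $\mu$-compatible is measured by an even 1-form $\eta$ on $Y^{[3]}$, which is $\delta$-closed thanks to the associativity of $\mu$; Lemma~\ref{lem:murray} then yields an even 1-form $\eta'$ on $Y^{[2]}$ with $\delta \eta' = \eta$, so that $\nabla_0 = \nabla' - \eta'$ is $\mu$-compatible. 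Its curvature $F_{\nabla_0}$ is also $\delta$-closed, and a second application of Lemma~\ref{lem:murray} produces an even 2-form $B_0 \in \ECDR^2(Y)$ with $F_{\nabla_0} = \delta B_0$, so that $(\nabla_0, B_0)$ is a connection on $\G_0$.

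Let $H_0 = \curv(\G_0, \nabla_0, B_0) \in \ECDR^3(M)$. By Proposition~\ref{prop:rep}, $[H_0] = -\DD(\G_0) = \xi = [H]$ in $H^3(M_b, \C)$, so $H - H_0 = d\beta$ for some 2-form $\beta$, which may be taken in $\ECDR^2(M)$ since $d$ preserves parity and both $H$ and $H_0$ are even. Setting $\G = \G_0 \otimes \I_\beta$, the submersion remains $\pi \maps Y \to M$ but the curving becomes $B_0 + \pi^* \beta$, so $d(B_0 + \pi^* \beta) = \pi^*(H_0 + d\beta) = \pi^* H$, giving $\curv(\G) = H$.

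The step I expect to be the main obstacle is producing the connection $(\nabla_0, B_0)$: both the correction making $\nabla'$ compatible with $\mu$ and the curving 2-form $B_0$ must live in the even, complexified part of the de Rham complex, and each is obtained by a slightly delicate application of Lemma~\ref{lem:murray} preceded by a $\delta$-closedness check that relies on associativity of $\mu$ and the definition of $\mu$-compatibility. Once the connection is in place, the rest is routine bookkeeping using the curvature formula $\DD(\G) = -[\curv(\G)]$.
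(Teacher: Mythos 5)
Your proposal is correct, but it takes a genuinely different route from the paper. The paper proves this by running the argument of Proposition~\ref{prop:rep} in reverse at the cocycle level: since $H$ is integral, $H|_{U_\alpha}$ is cohomologous in the \Cech--de Rham double complex to a \Cech\ cocycle $k_{\alpha\beta\gamma\delta}$ valued in $2\pi i\Z$; a primitive $(h_{\alpha\beta\gamma}, A_{\alpha\beta}, B_\alpha)$ is then exponentiated in its first slot to produce a \Cech--Deligne 2-cocycle, and surjectivity of $\Del$ (Theorem~\ref{thm:deligne}) yields a bundle gerbe with connection whose curvature is $H$ on the nose. You instead work at the level of geometric objects: surjectivity of $\DD$ gives a gerbe $\G_0$ with the right class, you then prove that $\G_0$ admits \emph{some} connection via two applications of Lemma~\ref{lem:murray} (correcting an arbitrary line bundle connection to be $\mu$-compatible, then solving $F_{\nabla_0}=\delta B_0$), and finally you match curvatures using $\DD(\G)=-[\curv(\G)]$ and absorb the exact discrepancy $d\beta$ by tensoring with $\I_\beta$; the parity bookkeeping you worry about is unproblematic, since $\delta$ and $d$ preserve parity and the paper itself applies Lemma~\ref{lem:murray} to even complexified forms in the proof of Theorem~\ref{thm:deligne}. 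What each approach buys: the paper's argument is shorter given Theorem~\ref{thm:deligne} and never needs the statement that every bundle gerbe admits a connection, whereas your route makes that existence statement explicit (it is Murray's classical argument, and a genuinely useful fact the paper leaves implicit) and anticipates the ``adjust by $\I_\beta$'' mechanism used later in Section~\ref{sec:bodyandsoul}. The only caveats are cosmetic: the sign in $\nabla_0=\nabla'\mp\eta'$ depends on how you define the discrepancy $\eta$, and the step $[H]=[H_0]$ in $H^3(M_b,\C)$ implies $H-H_0$ exact on $M$ tacitly uses that $i^*$ is a quasi-isomorphism (Proposition~\ref{prop:acyclic}'s ingredient), which you should cite.
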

\begin{proof}
  This proof is similar to that of Proposition \ref{prop:rep}, but running in
  reverse: in the \Cech--de Rham complex, we know that $H_\alpha = H|_{U_\alpha}$ is
  cohomologous to a \Cech\ representative $k_{\alpha\beta\gamma\delta} \in
  \Oh_\C(U_{\alpha\beta\gamma\delta})$ that, by hypothesis, is valued in $2\pi i
  \Z$. Thus:
  \[ D(h_{\alpha \beta \gamma}, A_{\alpha \beta}, B_\alpha) = (-k_{\alpha \beta \gamma
    \delta}, 0, 0, 0, H_\alpha) , \]
  for some \Cech--de Rham 2-cochain $(h_{\alpha\beta\gamma}, A_{\alpha\beta},
  B_\alpha)$. Setting $g_{\alpha \beta \gamma} = \exp(h_{\alpha\beta\gamma})$, a
  calculation shows that $(g_{\alpha \beta \gamma}, A_{\alpha \beta}, B_\alpha)$ is a
  2-cocycle in the \Cech--Deligne complex. Letting $\G$ be the bundle gerbe with this
  Deligne class, we have $\curv(\G) = H$.

\end{proof}

\subsection{Body and soul}
\label{sec:bodyandsoul}

We have noted that every supermanifold contains an ordinary manifold: the body. While
the inclusion of the body into the full supermanifold is canonical, there is no
canonical projection from the supermanifold down to the body. In fact, for
holomorphic supermanifolds, such a projection need not even exist.

We are working with smooth supermanifolds, so a projection does exist, but it is a
choice of extra data. In this section we fix such a choice. For a supermanifold $M$,
fix a \define{body projection}: a map $p \maps M \to M_b$ such that $pi = 1_{M_b}$,
where $i \maps M_b \inclusion M$ is the canonical inclusion of the body $M_b$.

In his treatment of supergeometry, it was DeWitt who christened the submanifold $M_b$
the `body' of $M$. He then called the directions in $M$ transverse to the body $M_b$
the `soul' \cite{DeWitt}. Although our framework for supergeometry is based on
locally ringed spaces and is distinct from DeWitt's formalism, we adopt his poetic
terminology.

For any supermanifold $M$, pullback along the inclusion $i \maps M_b \inclusion M$ gives us
a short exact sequence:
\[ 0 \longrightarrow \Soul^\bullet(M) \longrightarrow \Omega^\bullet(M)
  \stackrel{i^*}{\longrightarrow} \Omega^\bullet(M_b) \longrightarrow 0 . \]
Here, $\Omega^\bullet(M_b)$ is the ordinary de Rham complex on the ordinary manifold
$M_b$; we call it the \define{body of the de Rham complex $\Omega^\bullet(M)$}. The
kernel of the pullback, $\Soul^\bullet(M) = \ker(i^*)$, is the subcomplex of forms
that go to zero upon restriction to the body. We call this subcomplex
$\Soul^\bullet(M)$ the \define{soul of the de Rham complex $\Omega^\bullet(M)$}.

Having fixed a body projection $p \maps M \to M_b$, we get a splitting of this short
exact sequence.

\begin{prop}
  Let $M$ be a supermanifold equipped with a body projection $p \maps M \to
  M_b$. Then the pullback along the body projection, $p^*$, splits the short exact
  sequence of forms:
  \[
    \begin{tikzcd}
      0 \ar[r] & \Soul^\bullet(M) \ar[r] & \Omega^\bullet(M) \ar[r, "i^*"] & \Omega^\bullet(M_b) \ar[r] \ar[l, bend left=30, "p^*"] & 0 
    \end{tikzcd}
  \]
  In particular, the de Rham complex $\Omega^\bullet(M)$ decomposes into a direct sum
  of body and soul:
  \[ \Omega^\bullet(M) \iso \Omega^\bullet(M_b) \oplus \Soul^\bullet(M) , \]
  where the projection map $\Omega^\bullet(M) \to \Omega^\bullet(M_b)$ is $i^*$ and
  the inclusion map $\Omega^\bullet(M_b) \inclusion \Omega^\bullet(M)$ is $p^*$.
\end{prop}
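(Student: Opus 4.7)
The plan is to verify that $p^{*}$ is a section of $i^{*}$ and then invoke the splitting lemma for short exact sequences of abelian groups (applied in each bidegree of the $\Z \times \Z_2$-graded de Rham complex). The entire argument rests on the single defining identity of the body projection, $pi = 1_{M_b}$, together with contravariant functoriality of pullback.

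First I would observe that because pullback of forms is contravariantly functorial, the equation $pi = 1_{M_b}$ implies
\[ i^{*} p^{*} = (pi)^{*} = (1_{M_b})^{*} = \mathrm{id}_{\Omega^\bullet(M_b)} . \]
This is precisely the statement that $p^{*}$ is a section of $i^{*}$, so the short exact sequence splits. All that remains is to translate a splitting into a direct sum decomposition.

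Next I would produce the direct sum decomposition explicitly. Given any $\omega \in \Omega^\bullet(M)$, write
\[ \omega = p^{*} i^{*} \omega + \bigl( \omega - p^{*} i^{*} \omega \bigr) . \]
The first summand lies in $p^{*}\Omega^\bullet(M_b)$ by construction, and the second lies in $\Soul^\bullet(M)$ because
\[ i^{*}\bigl( \omega - p^{*} i^{*} \omega \bigr) = i^{*} \omega - (i^{*} p^{*}) i^{*} \omega = i^{*} \omega - i^{*} \omega = 0 . \]
For uniqueness, suppose $\omega = p^{*} \alpha + \sigma$ with $\alpha \in \Omega^\bullet(M_b)$ and $\sigma \in \Soul^\bullet(M)$; applying $i^{*}$ gives $i^{*} \omega = \alpha + 0$, so $\alpha = i^{*} \omega$ is forced and hence so is $\sigma$. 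Equivalently, $p^{*}$ is injective (since it has a left inverse), so identifying $\Omega^\bullet(M_b)$ with its image under $p^{*}$ yields $\Omega^\bullet(M) \iso \Omega^\bullet(M_b) \oplus \Soul^\bullet(M)$ with the claimed inclusion and projection maps.

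There is no real obstacle here: the proposition is pure diagram chasing once one recognizes that $pi = 1_{M_b}$ yields $i^{*} p^{*} = \mathrm{id}$ by functoriality. The only thing worth emphasizing in the write-up is that the decomposition respects both the $\Z$-grading by form degree and the internal $\Z_2$-grading by parity, since pullback preserves both, so the direct sum statement holds bidegree by bidegree.
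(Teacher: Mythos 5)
Your proposal is correct and takes essentially the same route as the paper: both arguments hinge on the defining identity $pi = 1_{M_b}$ giving $i^* p^* = \mathrm{id}$ by functoriality of pullback, with the direct sum decomposition $\omega = p^* i^* \omega + (\omega - p^* i^* \omega)$ then following by the standard splitting argument. The only cosmetic difference is that the paper also checks surjectivity of $i^*$ directly (locally, a form on $M_b$ is a form on $M$ independent of the odd coordinates, patched with a partition of unity), so that exactness of the sequence holds independently of the choice of body projection; in your argument this is anyway automatic, since $p^*$ is a right inverse of $i^*$.
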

\begin{proof}
  To show that $i^*$ is onto, first note that this is automatic in any coordinate
  patch with coordinates $(x_i, \theta_j)$: in this patch, a form on the body is just
  a form on the supermanifold independent of the $\theta_j$ coordinates. Surjectivity
  now follows from a partition of unity argument.

  To show that the sequence splits, note that $pi = 1$ by the definition of the body
  projection.  Thus $i^* p^* = 1$, and we are done.
\end{proof}

\noindent
Using this proposition, we see that any $k$-form $\omega \in \Omega^k(M)$ uniquely
decomposes into its \define{body} $\omega_b = p^* i^* \omega$ and its \define{soul}
$\omega_s = \omega - p^* i^* \omega$. The importance of the body and soul
decomposition lies in the fact that the soul of the de Rham complex
$\Soul^\bullet(M)$ is acyclic: its cohomology is trivial in all degrees.

\begin{prop}
  \label{prop:acyclic}
  $H^\bullet(\Soul^\bullet(M)) = 0$. In other words, if $\omega = \omega_s$ is a
  closed $k$-form that is pure soul, then it is exact: $\omega = d\sigma$, for some
  $\sigma \in \Soul^{k-1}(M)$.
\end{prop}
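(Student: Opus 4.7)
The plan is to build a chain homotopy $h \maps \Omega^k(M) \to \Omega^{k-1}(M)$ satisfying
\[ dh + hd = \mathrm{id} - p^* i^* , \]
with the additional property that $h$ sends soul forms to soul forms. Granting this, if $\omega$ is closed and pure soul, then $p^* i^* \omega = 0$ and $d\omega = 0$, so $\omega = d(h\omega)$ with $h\omega \in \Soul^{k-1}(M)$, which is exactly the lemma.

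To construct $h$, I will use the body projection $p$ to define a smooth family of supermanifold maps $\phi_t \maps M \to M$, $t \in [0,1]$, interpolating between $\phi_0 = i \circ p$ and $\phi_1 = \mathrm{id}_M$ by the operation ``scale the odd coordinates by $t$.'' In local coordinates $(x_i, \theta_j)$ adapted to $p$, this is simply $\phi_t(x,\theta) = (x, t\theta)$, with generating vector field $Y_t = \tfrac{1}{t} \sum_j \theta_j \partial_{\theta_j}$. Globally, I will invoke Batchelor's theorem: every smooth supermanifold admits an isomorphism $M \iso \Pi E$ for some vector bundle $E \to M_b$, and the isomorphism can be arranged so that $p$ becomes the bundle projection; then $\phi_t(e) = te$ is defined on all of $\Pi E$. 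Given $\phi_t$, the graded Cartan formula together with $d\omega = 0$ gives $\tfrac{d}{dt}\phi_t^*\omega = d\phi_t^* \iota_{Y_t}\omega$, and integrating over $[0,1]$ produces
\[ h\omega = \int_0^1 \phi_t^* \iota_{Y_t} \omega \, dt \]
with the desired homotopy identity. The apparent $1/t$ singularity in $Y_t$ is harmless on soul forms: each term of $\iota_{Y_t}\omega$ still carries a factor of some $\theta$, so $\phi_t^* \iota_{Y_t}\omega$ vanishes to first order at $t = 0$ and the integrand extends smoothly across $t=0$. To verify that $h\omega$ itself lies in the soul, I use the identity $\phi_t \circ i = i$ (scaling the zero section is trivial), which gives $i^* h\omega = \int_0^1 i^* \iota_{Y_t} \omega \, dt = 0$, again because $\iota_{Y_t}\omega$ is pure soul.

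I expect the main technical obstacle to be producing a Batchelor isomorphism compatible with the prescribed body projection $p$: Batchelor's theorem guarantees \emph{some} isomorphism $M \iso \Pi E$, but matching it to a preselected $p$ requires additional care. This is well-established folklore, drawing on the tools of \cite{Batchelor, Gawedzki} already cited for the existence of $p$, but deserves careful statement. A sheaf-theoretic alternative sidesteps this issue entirely: the sheaves $\Soul^k$ are fine, so to conclude $H^\bullet(\Soul^\bullet(M)) = 0$ it suffices to show the complex $\underline{\Soul}^\bullet$ is exact on stalks, and this reduces to running the scaling homotopy on a super domain, where $\phi_t(x,\theta) = (x, t\theta)$ is a tautologically defined endomorphism.
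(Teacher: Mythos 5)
Your argument is correct, but it takes a genuinely different route from the paper. The paper's proof is two lines: it quotes Kostant's theorem \cite{Kostant} that $i^* \maps \Omega^\bullet(M) \to \Omega^\bullet(M_b)$ is a quasi-isomorphism, notes that $i^*$ is surjective (established in the preceding proposition via a partition of unity), and concludes from the long exact sequence that the kernel $\Soul^\bullet(M)$ is acyclic. You instead re-prove the underlying fact by hand, constructing an explicit homotopy operator $h$ from the odd-coordinate scaling $\phi_t$, which is essentially how Kostant's quasi-isomorphism is itself proved; the payoff is a self-contained argument and an explicit primitive $\sigma = h\omega$, at the cost of either invoking Batchelor's theorem to globalize the scaling or running the local argument through soft-sheaf machinery. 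Two remarks: first, the ``main technical obstacle'' you flag---matching the Batchelor isomorphism to the prescribed body projection $p$---is not actually needed, since $\Soul^\bullet(M) = \ker(i^*)$ does not depend on $p$, and a homotopy identity $dh + hd = \mathrm{id} - q^* i^*$ for \emph{any} projection $q$ (e.g.\ the one furnished by an arbitrary Batchelor splitting) already gives $\omega = d(h\omega)$ and $i^* h\omega = 0$ for closed pure-soul $\omega$; your sheaf-theoretic alternative likewise sidesteps it. Second, in the sheaf-theoretic variant you should also record exactness of $\Soul^\bullet$ at degree $0$ (a soul function killed by $d$ is locally constant, hence zero), so that the standard argument for an exact bounded-below complex of soft sheaves applies in all degrees; with that, both versions of your proof go through.
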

\begin{proof}
  In his notes on supergeometry~\cite{Kostant}, Kostant observes that restriction to
  the body, $i^* \maps \Omega^\bullet(M) \to \Omega^\bullet(M_b)$, is a
  quasi-isomorphism: it induces isomorphism on cohomology. So, the map $i^*$ is a
  surjective map of complexes that is also a quasi-isomorphism, and it follows that
  its kernel, $\Soul^\bullet(M)$, is an acyclic complex.
\end{proof}

\noindent
Now let us apply the body and soul decomposition to the curvature of a bundle
gerbe. If $H = \curv(\G)$ is the curvature 3-form of a bundle gerbe with connection,
then applying the body and soul decomposition to the complexified de Rham complex, we
have $H = H_b + H_s$. Both $H_b$ and $H_s$ are closed, so by the last proposition
there is a pure soul 2-form $\beta \in \Soul^2(M)$ such that $H_s = d\beta$, and we
find:
\[ H = H_b + d\beta . \]
We noted in the last section that $H$ is integral: its cohomology class $[H] \in
H^3(M_b, \C)$ lies in the image of integral cohomology, $H^3(M_b, 2\pi i \Z) \to
H^3(M_b, \C)$. Because $[H] = [H_b]$, the body $H_b$ must be integral as well.

Thus, from the curvature 3-form $H$ of a bundle gerbe $\G$ on a supermanifold $M$, we
have constructed an ordinary, closed, integral 3-form $H_b$ on the body $M_b$. From
the theory of bundle gerbes, it now follows that there is an ordinary bundle gerbe
$\G_b$ on the body $M_b$ with curvature $H_b = \curv(\G_b)$. By ``ordinary'', we mean
that this bundle gerbe is constructed entirely in the category of smooth manifolds:
$\G_b = (Y_b, L_b, \mu_b)$ for $Y_b$ a smooth manifold and $Y_b \to M_b$ a surjective
submersion, $L_b$ a line bundle on $Y_b^{[2]}$, and so forth.

How does the bundle gerbe $\G_b$ in the category of smooth manifolds compare to the
bundle gerbe $\G$ that we started with in the category of supermanifolds? Using the
projection $p \maps M \to M_b$, we can try to compare them: the pullback $p^* \G_b$
is a bundle gerbe on $M$. This new bundle gerbe on $M$ has different curvature from
$\G$, but only just: $\curv(p^* \G_b) = H_b$, while $\curv(\G) = H_b + d\beta$. This
suggests that if we add $\beta$ to the curving of $p^* \G_b$, we might recover
$\G$. Indeed:
\[ \curv(p^* \G_b \otimes \I_\beta ) = H . \]
Evan so, $\G$ need not be equivalent to $p^* \G \otimes \I_\beta$. As a pair of
bundle gerbes on $M$ with the same curvature, they can differ by a flat bundle gerbe.

To understand this, fix a 3-form $H$ and consider the set of bundle gerbes with
curvature $H$:
\[ \Grb{M}{H} = \left\{ [\G] \in \mathbb{G}^\nabla(M) \, : \, \curv(\G) = H \right\}
  . \]
Of course, this set is empty unless $H$ is even, closed and integral, so we assume
this from now on. This subset of the group $\mathbb{G}^\nabla(M)$ is not itself a
group unless $H = 0$, because curvatures add under tensor product: $\curv( \G \otimes
\H) = \curv(\G) + \curv(\H)$.

We call a bundle gerbe $\G$ \define{flat} if $\curv(\G) = 0$. The group $\Grb{M}{0}$
of equivalence classes of flat bundle gerbes is an essential tool for understanding
how two bundle gerbes with the same curvature can differ. This is because, for any
two bundle gerbes $\G$ and $\H$ with the same curvature $H$, there is a unique flat
bundle gerbe $\G_0$ such that:
\[ \G \otimes \G_0 \simeq \H .  \]
Indeed, tensoring both sides with $\G^*$, we must have $\G_0 \simeq \G^* \otimes
\H$. We have proved:
\begin{prop}
  For any even, closed, integral 3-form $H \in \CDR^3(M)$, the set $\Grb{M}{H}$ of
  equivalence classes of bundle gerbes with curvature $H$ is a torsor for the group
  of flat bundle gerbes, $\Grb{M}{0}$.
\end{prop}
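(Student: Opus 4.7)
The plan is to verify the three defining properties of a torsor for the action of $\Grb{M}{0}$ on $\Grb{M}{H}$ by tensor product: the set $\Grb{M}{H}$ is nonempty, the action is well-defined, and the action is free and transitive. The entire proof should be a bookkeeping exercise using the results already established, in particular Theorem~\ref{thm:integral} and the additivity of curvature under tensor product.

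First I would argue that $\Grb{M}{H}$ is nonempty. Since $H$ is assumed even, closed and integral, Theorem~\ref{thm:integral} produces a bundle gerbe $\G$ on $M$ with $\curv(\G) = H$. Next, I would define the action of $\Grb{M}{0}$ on $\Grb{M}{H}$ by $[\G_0] \cdot [\G] := [\G \otimes \G_0]$. This is well-defined on equivalence classes because tensor product respects equivalence, and the result still lies in $\Grb{M}{H}$ since curvatures add: $\curv(\G \otimes \G_0) = \curv(\G) + \curv(\G_0) = H + 0 = H$. The usual trivial-gerbe and associativity properties show this is indeed a group action of the abelian group $\Grb{M}{0}$.

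For transitivity and freeness, I would use the group structure on $\mathbb{G}^\nabla(M)$ established in Section~\ref{sec:conn}. Given any two classes $[\G], [\H] \in \Grb{M}{H}$, set $\G_0 := \G^* \otimes \H$. Then $\curv(\G_0) = -\curv(\G) + \curv(\H) = -H + H = 0$, so $[\G_0] \in \Grb{M}{0}$, and $\G \otimes \G_0 \simeq \G \otimes \G^* \otimes \H \simeq \H$ since $\G \otimes \G^* \simeq \I$ in $\mathbb{G}^\nabla(M)$ by Theorem~\ref{thm:deligne}. This establishes transitivity, and moreover uniqueness of the witnessing $\G_0$, since tensoring $\G \otimes \G_0 \simeq \G \otimes \G_0'$ on both sides with $\G^*$ forces $\G_0 \simeq \G_0'$. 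Freeness is therefore automatic.

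There is no real obstacle here; the statement is essentially a formal consequence of the fact that $\mathbb{G}^\nabla(M)$ is an abelian group (Theorem~\ref{thm:deligne}), that the curvature map $\curv \maps \mathbb{G}^\nabla(M) \to \Omega^3_\C(M)$ is a group homomorphism whose image consists of even, closed, integral 3-forms, and that $\Grb{M}{H}$ and $\Grb{M}{0}$ are, respectively, a coset and the kernel of this homomorphism. The only mildly nontrivial input is nonemptiness, which is exactly what Theorem~\ref{thm:integral} provides.
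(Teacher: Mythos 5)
Your proposal is correct and follows essentially the same route as the paper: the paper's argument is precisely that for $\G, \H$ with the same curvature the unique flat gerbe is $\G_0 \simeq \G^* \otimes \H$, obtained by tensoring with $\G^*$ and using additivity of curvature, with nonemptiness implicitly supplied by Theorem~\ref{thm:integral}. You simply spell out the nonemptiness and well-definedness steps that the paper leaves tacit.
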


\noindent
Returning to the body and soul decomposition, we would like to compare the torsor of
bundle gerbes of specified curvature on the body with that on the full
supermanifold. To begin, let us compare the group of flat bundle gerbes
$\Grb{M_b}{0}$ on the body with the that on the full supermanifold,
$\Grb{M}{0}$. Using our fixed projection map $p \maps M \to M_b$, we can pullback
flat bundle gerbes on the body. The resulting bundle gerbe on $M$ is still flat, and
this is an isomorphism:
\begin{prop}
  Let $M$ be a supermanifold equipped with a body projection $p \maps M \to
  M_b$. Pullback of flat bundle gerbes along the body projection induces an
  isomorphism:
  \[ p^* \maps \Grb{M_b}{0} \to \Grb{M}{0} \]
  between the group of flat bundle gerbes on the body $M_b$ and on the the full
  supermanifold $M$.
\end{prop}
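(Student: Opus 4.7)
The plan is to verify the three standard pieces: that $p^*$ is a group homomorphism, that it is injective, and that it is surjective. The first two are immediate. Pullback commutes with tensor product, preserves the trivial bundle gerbe, and preserves flatness (since curvature pulls back), so $p^*$ is a well-defined group homomorphism on $\Grb{M_b}{0}$. Injectivity follows from $pi = 1_{M_b}$: functoriality of pullback gives $i^* \circ p^* = \mathrm{id}$, so $i^*$ is a left inverse to $p^*$.

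The substantive work is surjectivity: given a flat bundle gerbe $\G$ on $M$, we must produce an equivalence $\G \simeq p^*(i^* \G)$. By Theorem~\ref{thm:deligne}, it suffices to establish $\Del(\G) = p^* i^* \Del(\G)$ in Deligne cohomology $H^2(M, \DelCpx(2))$. The strategy is to show that the Deligne class of any flat bundle gerbe admits a \Cech\ representative valued in the locally constant sheaf $\C^* \subset \Oh_\C^*$. Since both $p$ and $i$ have underlying topological map equal to the identity on $M_b$ (as $M$ and $M_b$ share the same underlying topological space), pullback acts as the identity on \Cech\ cohomology with constant coefficients, which is what we need.

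To extract such a representative, begin with a \Cech--Deligne 2-cocycle $(g_{\alpha\beta\gamma}, A_{\alpha\beta}, B_\alpha)$ for $\Del(\G)$ on a good cover $\{U_\alpha\}$. Flatness translates to $dB_\alpha = 0$ (as in Proposition~\ref{prop:rep}). On each contractible $U_\alpha$, the Poincar\'e lemma in its even-parity variant gives $B_\alpha = dz_\alpha$ for some even 1-form $z_\alpha$; subtracting the \Cech--Deligne coboundary of $(1, z_\alpha)$ yields a cohomologous cocycle with $B = 0$. The cocycle condition then forces $dA_{\alpha\beta} = 0$, so a further application of the Poincar\'e lemma together with the exponential (as in the proof of Proposition~\ref{prop:rep}) writes $A_{\alpha\beta} = d\log h_{\alpha\beta}$; subtracting the coboundary of $(h_{\alpha\beta}, 0)$ kills $A$ as well. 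The remaining cocycle $(g', 0, 0)$ satisfies $d\log g' = 0$, forcing $g'$ to be a \Cech\ 2-cocycle in the locally constant sheaf $\C^*$, whose class lives in $\check H^2(M_b, \C^*)$. On such a class, $p^* i^*$ acts as the identity, yielding $\Del(\G) = p^* i^* \Del(\G)$.

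The main obstacle is the Poincar\'e lemma in its even-parity form: given a closed even $p$-form on a contractible superdomain, we need an even $(p-1)$-form as a primitive. This follows from the standard supermanifold Poincar\'e lemma by taking any primitive, decomposing it into even and odd parts, and discarding the odd component (which is automatically closed because $d$ preserves parity). Once this is in hand, the reduction to a $\C^*$-valued cocycle is purely algebraic, and surjectivity --- hence the full isomorphism --- drops out.
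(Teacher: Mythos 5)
Your proof is correct, but it takes a genuinely different route from the paper's. For surjectivity, the paper compares $\G$ directly with $p^*i^*\G$: since $p$ and $i$ are topologically trivial, $\DD(p^*i^*\G \otimes \G^*)=0$, so by Proposition~\ref{prop:trivconn} the difference is $\I_\beta$ for some even 2-form $\beta$, which can be taken pure soul; flatness forces $d\beta=0$, and acyclicity of the soul complex (Proposition~\ref{prop:acyclic}) makes $\beta$ exact, hence $\I_\beta \simeq \I$. You instead normalize the \Cech--Deligne cocycle of a flat gerbe, using the truncation of $\DelCpx(2)$ at $\Omega^2_{\C,0}$, the even Poincar\'e lemma (your parity-splitting of a primitive is fine, since $d$ preserves parity), and Kostant's exponential, to reach a representative $(g',0,0)$ with $g'$ valued in the locally constant sheaf $\C^*$; since the underlying continuous maps of $p$ and $i$ are the identity on $M_b$, such classes are fixed by $p^*i^*$, and Theorem~\ref{thm:deligne} (naturality plus injectivity of $\Del$) gives $\G \simeq p^*(i^*\G)$. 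Your route in effect proves the stronger statement that $\Grb{M}{0} \iso \check{H}^2(M_b,\C^*)$, i.e.\ flat bundle gerbes with connection see only the topology of the body, and it bypasses the body/soul decomposition of forms entirely; the paper's route is shorter given Propositions~\ref{prop:trivconn} and \ref{prop:acyclic}, stays at the level of geometric operations on gerbes rather than cocycle manipulations, and reuses the soul machinery needed for the main theorem anyway. Two small glosses in your write-up, neither a gap: pullback of bundle gerbes is functorial only up to equivalence, which is all you need at the level of classes in $\Grb{M}{0}$; and you should note (as you do implicitly for $p^*$) that $i^*$ preserves flatness so that $i^*\G$ indeed lies in $\Grb{M_b}{0}$.
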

\begin{proof}
  To show $p^*$ has trivial kernel, suppose $p^* \G \simeq \I$. Because $pi = 1$ for
  the canonical inclusion $i$, applying $i^*$ yields $\G \simeq \I$. On the other
  hand, let $\G$ be a flat bundle gerbe on the supermanifold $M$ and consider the
  bundle gerbe $\G_b = i^* \G$ on the body. We claim $p^* \G_b \simeq \G$. Indeed,
  since $p$ and $i$ act trivially on the Dixmier--Douady class, we have that $\DD(p^*
  \G_b \otimes \G^*) = 0$. Thus from Proposition \ref{prop:trivconn}, we have:
  \[ p^* \G_b \otimes \G^* \simeq \I_\beta \]
  for some 2-form $\beta$. Moreover, we can choose $\beta$ to be pure soul, because
  applying $i^*$ kills it. Now, since the left-hand side is flat, we must have
  $d\beta = 0$. But a closed pure soul 2-form is integral, so $\I_\beta \simeq
  \I$. We conclude $p^* \G_b \simeq \G$, as desired.
\end{proof}

The proof of the previous proposition actually shows that restriction to the body
$i^*$ is the inverse of $p^*$, so it turns out the isomorphism between the groups of
flat bundle gerbes is independent of the choice of body projection. Nevertheless, the
choice of body projection is essential for comparing the torsors $\Grb{M_b}{H_b}$ and
$\Grb{M}{H}$, so we have chosen to emphasize it.

\begin{thm}
  \label{thm:torsors}
  Let $M$ be a supermanifold equipped with a body projection $p \maps M \to M_b$, and
  let $H \in \CDR^3(M)$ be an even. closed, integral 3-form. Choosing an even 2-form
  $\beta \in \CDR^2(M)$ such that $H_s = d\beta$ is the soul of $H$, we have the
  following isomorphism of $\Grb{M_b}{0}$-torsors:
  \[
    \begin{array}{ccc}
      \theta \maps \Grb{M_b}{H_b} & \to & \Grb{M}{H}, \\
      \G_b & \mapsto & p^* \G_b \otimes \I_\beta .
    \end{array}
  \]
\end{thm}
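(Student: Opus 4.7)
The plan is to verify that $\theta$ is a well-defined equivariant map of $\Grb{M_b}{0}$-torsors; bijectivity then follows automatically because a morphism between nonempty torsors for the same group is always an isomorphism. So the proof reduces to two computations together with a nonemptiness check.

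First, I would check that $\theta$ lands in $\Grb{M}{H}$. Given $\G_b$ on $M_b$ with $\curv(\G_b) = H_b = i^* H$, curvatures add under tensor product and pull back naturally along $p$, so
\[ \curv(p^* \G_b \otimes \I_\beta) = p^*(i^* H) + d\beta. \]
By the body--soul decomposition $H = p^* i^* H + H_s$ established on $M$ just before the theorem, and by our choice $H_s = d\beta$, this right-hand side equals $H$. Thus $\theta(\G_b) \in \Grb{M}{H}$.

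Next, I would verify equivariance. The group $\Grb{M_b}{0}$ acts on the source by $\G_b \mapsto \G_b \otimes \F_0$. On the target it acts through the isomorphism $p^* \maps \Grb{M_b}{0} \isoto \Grb{M}{0}$ of the previous proposition, namely $\G \mapsto \G \otimes p^* \F_0$. Since pullback is compatible with tensor products, I compute
\[ \theta(\G_b \otimes \F_0) = p^* \G_b \otimes p^* \F_0 \otimes \I_\beta = \theta(\G_b) \otimes p^* \F_0, \]
which is exactly the required equivariance. To conclude, note that $\Grb{M_b}{H_b}$ is nonempty by Theorem~\ref{thm:integral} applied to the even, closed, integral 3-form $H_b$ on the ordinary manifold $M_b$; equivariance of $\theta$ between nonempty torsors then forces it to be a bijection.

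There is no real obstacle; the only point that demands care is to disambiguate whether $H_b$ refers to a form on $M_b$ (namely $i^* H$) or its lift $p^* i^* H$ on $M$ used in the body--soul decomposition, and to keep straight that the two descriptions are identified by $p^*$.
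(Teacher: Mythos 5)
Your proposal is correct and follows essentially the same route as the paper: show $\theta$ intertwines the $\Grb{M_b}{0}$-actions using compatibility of $p^*$ with tensor products, and conclude via the general fact that an equivariant map of torsors is an isomorphism. The extra checks you include (that $\curv(p^*\G_b\otimes\I_\beta)=p^*i^*H+d\beta=H$, and nonemptiness of $\Grb{M_b}{H_b}$) are exactly what the paper establishes in the discussion preceding the theorem, so nothing is missing.
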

\begin{proof}
  Because any map between torsors is an isomorphism, we need only show that $\theta$
  intertwines the action. For any bundle gerbe $\G_b$ on the body with curvature
  $H_b$, and any flat bundle gerbe $\G_0$, we have:
  \[ \theta( \G_b \otimes \G_0 ) = p^*(\G_b \otimes \G_0) \otimes \I_\beta \simeq
    \left( p^* \G_b \otimes \I_\beta \right) \otimes p^* \G_0 , \]
  using the fact that pullback respects the tensor product. Since we are using $p^*$
  to identify the groups of flat bundle gerbes, this is the desired result.
\end{proof}

\noindent
As a corollary, we get the body and soul decomposition of any bundle gerbe.

\begin{cor}
  Let $\G$ be a bundle gerbe on a supermanifold $M$, and let $p \maps M \to M_b$ be a
  body projection. Then there is a bundle gerbe $\G_b$ on the body $M_b$ and a pure
  soul 2-form $\beta$ such that:
  \[ \G \simeq p^* \G_b \otimes \I_\beta . \]
  Moreover, $\G_b \simeq i^* \G$, and this is the unique choice up to equivalence,
  while $\beta$ is unique up to the addition of an exact pure soul 2-form.
\end{cor}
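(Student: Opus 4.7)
The plan is to read off the corollary as a direct consequence of Theorem~\ref{thm:torsors}, then extract the uniqueness statements. First, I would set $H = \curv(\G)$, and use the body and soul decomposition of the complexified de Rham complex to write $H = H_b + H_s$, where $H_b = p^* i^* H$ is the body and $H_s \in \Soul^3(M)$ is the soul. Since $d$ commutes with pullback along $i$ and $p$, and $H$ is closed, both $H_b$ and $H_s$ are closed. By Proposition~\ref{prop:acyclic} (acyclicity of the soul subcomplex), there is a pure soul 2-form $\beta \in \Soul^2(M)$ with $H_s = d\beta$. In particular $H = H_b + d\beta$, which is the hypothesis needed to invoke Theorem~\ref{thm:torsors}.

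Next I would apply Theorem~\ref{thm:torsors}: with this choice of $\beta$, the map $\theta \maps \Grb{M_b}{H_b} \to \Grb{M}{H}$, $\G_b \mapsto p^* \G_b \otimes \I_\beta$, is an isomorphism of $\Grb{M_b}{0}$-torsors. Since $\G \in \Grb{M}{H}$, there is a unique (up to equivalence) $\G_b \in \Grb{M_b}{H_b}$ with $\G \simeq p^* \G_b \otimes \I_\beta$, giving the decomposition asserted by the corollary. The uniqueness of $\G_b$ up to equivalence is part of the torsor isomorphism.

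To identify this $\G_b$ with $i^* \G$, I would apply $i^*$ to the equivalence $\G \simeq p^* \G_b \otimes \I_\beta$. Using $i^* p^* = (pi)^* = 1_{M_b}$ and the fact that $i^* \I_\beta = \I_{i^* \beta} = \I_0 \simeq \I$ (here is exactly where it matters that $\beta$ is pure soul: $i^* \beta = 0$), we obtain $i^* \G \simeq \G_b$, as desired.

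For the uniqueness of $\beta$ up to an exact pure soul 2-form, suppose $p^* \G_b \otimes \I_\beta \simeq p^* \G_b \otimes \I_{\beta'}$ with both $\beta, \beta' \in \Soul^2(M)$. Tensoring with the dual of $p^* \G_b$ yields $\I_{\beta - \beta'} \simeq \I$, so by Proposition~\ref{prop:trivconn} the 2-form $\beta - \beta'$ is closed and integral. Since it also lies in $\Soul^2(M)$, Proposition~\ref{prop:acyclic} produces $\sigma \in \Soul^1(M)$ with $\beta - \beta' = d\sigma$, as claimed. No step presents a genuine obstacle; the only subtlety is remembering to take $\beta$ pure soul at the outset, which is precisely what makes $\G_b$ canonically equivalent to $i^* \G$ rather than merely existing abstractly.
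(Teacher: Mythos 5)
Your proposal is correct and follows essentially the same route as the paper: existence via Theorem~\ref{thm:torsors} with $H = \curv(\G)$ and a pure soul primitive $\beta$ of $H_s$ from Proposition~\ref{prop:acyclic}, identification $\G_b \simeq i^*\G$ by applying $i^*$ (using $i^*p^* = 1$ and $i^*\I_\beta \simeq \I$), and uniqueness of $\beta$ by cancelling $p^*\G_b$ and combining Proposition~\ref{prop:trivconn} with Proposition~\ref{prop:acyclic}. Your write-up is if anything slightly more explicit than the paper's, e.g.\ in checking that $H_s$ is closed and in recording why $\theta$ being a torsor isomorphism gives uniqueness of $\G_b$.
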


\begin{proof}
  The existence of this decomposition follows from Theorem \ref{thm:torsors} with $H
  = \curv(\G)$. Applying $i^*$ to both sides of $\G \simeq p^* \G_b \otimes
  \I_\beta$, see that $\G_\b \simeq i^* \G$, where we have used $i^* \I_\beta \simeq \I$,
  because $\beta$ is pure soul. Finally, if
  \[ p^* \G_\b \otimes \I_\beta \simeq p^* \G_b \otimes \I_{\beta'} \]
  for two pure soul 2-forms $\beta$ and $\beta'$, then we can tensor on both sides
  with $p^* \G_\b^*$ to conclude $\I_\beta \simeq \I_{\beta'}$. From Proposition
  \ref{prop:trivconn}, we conclude that $\beta - \beta'$ is closed, and Proposition
  \ref{prop:acyclic} tells us it is exact.
\end{proof}

\end{document}